 \newtheorem{theorem}{Theorem}[section]
 \newtheorem{proposition}[theorem]{Proposition}
 \newtheorem{corollary}[theorem]{Corollary}
 \newtheorem{lemma}[theorem]{Lemma}
 \theoremstyle{definition}
 \newtheorem{definition}[theorem]{Definition}
 \newtheorem{example}[theorem]{Example}
  \newtheorem{remark}[theorem]{Remark}
\newcommand{\real}{\ensuremath{\mathbb{R}}}
\newcommand{\borel}[1]{\ensuremath{\mathcal{B}(#1)}}
\newcommand{\pdist}[1]{\ensuremath{d_{p,\mathcal{#1}}}}
\newcommand{\gpdist}{\ensuremath{d_p}}
\newcommand{\mdist}[1]{\ensuremath{m_{p,\mathcal{#1}}}}
\newcommand{\bardist}[1]{\ensuremath{\hat{d}_{p,\mathcal{#1}}}}
\newcommand{\mm}[1]{\ensuremath{\mathcal{#1}}}
\newcommand{\ct}{\ensuremath{\sigma}}
\newcommand{\qct}{\ensuremath{\kappa}}
\newcommand{\kct}{\ensuremath{\sigma_{k,p}}}
\newcommand{\mpt}{\ensuremath{\Lambda}}
\newcommand{\lset}[1]{\ensuremath{\downarrow^{#1}}}
\newcommand{\dist}[1]{\ensuremath{d_{p,\mathcal{#1}}}}
\newcommand{\tree}[1]{\ensuremath{T_p(\mathcal{#1})}}
\newcommand{\ftree}[1]{\ensuremath{\mathcal{F}_p(\mathcal{#1})}}
\newcommand{\kftree}[1]{\ensuremath{\mathcal{F}_{k,p}(\mathcal{#1})}}
\newcommand{\qnorm}[1]{\ensuremath{\|{#1}\|_{\nu,q}}}
\newcommand{\ddist}{\ensuremath{\theta_{k,q}}}
\newcommand{\fdist}{\ensuremath{\theta}}
\newcommand{\dgw}{\ensuremath{d_{GW,p}}}
\newcommand{\dks}{\ensuremath{d_{KS,p}}}
\newcommand{\dkm}{\ensuremath{d_{GW,p}}}
\newcommand{\fks}{\ensuremath{D_{KS,p}}}
\newcommand{\var}{\ensuremath{V_p}}
\newcommand{\qV}{\ensuremath{W_p}}
\newcommand{\cdist}[1]{\ensuremath{d_{p,\mathcal{#1}_n}}}
\newcommand{\cct}[1]{\ensuremath{\sigma_{p,\mathcal{#1}_n}}}
\newcommand{\cftree}[1]{\ensuremath{\mathcal{F}_p({\mathcal{#1}}_n)}}
\newcommand{\ctree}[1]{\ensuremath{T_p({\mathcal{#1}}_n)}}
\newcommand{\cqmap}[1]{\ensuremath{\alpha_{p,\mathcal{#1}_n}}}
\newcommand{\cqct}[1]{\ensuremath{\kappa_{p,\mathcal{#1}_n}}}
\newcommand{\bcct}{\ensuremath{\tau_\varepsilon}}
\newcommand{\bcftree}[1]{\ensuremath{\mathcal{F}_{p,\varepsilon}({\mathcal{#1}}_n)}}
\begin{document}


\title{Robust Representation and Estimation of Barycenters and Modes of Probability Measures on Metric Spaces}
\author{Washington Mio and Tom Needham}
\affil{Department of Mathematics, Florida State University, Tallahassee, FL, USA}
\date{ }
\maketitle

\begin{abstract}
This paper is concerned with the problem of defining and estimating statistics for distributions on spaces such as Riemannian manifolds and more general metric spaces. The challenge comes, in part, from the fact that statistics such as means and modes may be unstable: for example, a small perturbation to a distribution can lead to a large change in Fr\'echet means on spaces as simple as a circle. We address this issue by introducing a new merge tree representation of barycenters called the barycentric merge tree (BMT), which takes the form of a measured metric graph and summarizes features of the distribution in a multiscale manner. Modes are treated as special cases of barycenters through diffusion distances. In contrast to the properties of classical means and modes, we prove that BMTs are stable---this is quantified as a Lipschitz estimate involving optimal transport metrics. This stability allows us to derive a consistency result for approximating BMTs from empirical measures, with explicit convergence rates. We also give a provably accurate method for discretely approximating the BMT construction and use this to provide numerical examples for distributions on spheres and shape spaces.
\end{abstract}

\medskip
{\em Keywords:} Fr\'{e}chet mean, barycenter, median, modes of probability distributions, merge trees.

\medskip
{\em 2020 Mathematics Subject Classification:} 62R20, 62R30, 62R40, 55N31

\section{Introduction}

The core theme of this paper is the development of stable and robust representations for barycenters and modes of probability distributions on metric spaces. The primary goal is to construct simple and provably stable summaries of barycenters and modes for datasets formed of objects that can be of varied nature, so long as they are representable as points in a metric space. This is achieved through a summary representation termed {\em barycentric merge tree} (BMT).

\subsection{Barycenters}

The mean of a probability measure $\mu$ on Euclidean space $\real^d$, as the expected value of a random variable $x \in \real^d$ with law $\mu$, has long been used as a simple statistical summary that can be robustly estimated from sufficiently many independent draws from $\mu$. The reinterpretation of the mean as the most central value of $x$, as measured by variance, dates back at least to \'{E}.\ Cartan in studies of more general forms of centers of mass of distributions on Riemannian manifolds (cf.\,\cite[p.\,235]{berger2003view}). If $\mu$ has finite second moment, the mean is the unique minimizer of the Fr\'{e}chet variance function $V_2 \colon \real^d \to \real$ given by
\begin{equation}
V_2(x) \coloneqq \int_{\real^d} \|y-x\|^2 d\mu (y),
\end{equation}      
the expected value of the squared distance to $x$ \cite{frechet1948}. This viewpoint shed light on how to define barycenters for probability distributions on general metric spaces. More formally, a {\em metric measure space} ($mm$-space) is a triple $\mm{X} = (X, d_X, \mu)$, where $(X,d_X)$ is a Polish (complete and separable) metric space and $\mu$ is a Borel probability measure on $(X,d_X)$. The measure $\mu$ \emph{has finite $p$-moments}, $p \geq 1$, if $\int_X d^p (x_0,y) \, d\mu (y) < \infty$ for some (and therefore all) $x_0 \in X$. The collection of all Borel probability measures on $(X,d_X)$ with finite $p$-moments is denoted $\borel{X,d_X;p}$. For $\mu \in \borel{X,d_X;p}$, the {\em Fr\'{e}chet variance function} of order $p$ of $\mm{X}$, denoted $\var \colon X \to \real$, is defined by
\begin{equation} \label{E:mdistance}
\var(x) \coloneqq \mathbb{E}_\mu [d_X^p(x,\cdot)] = \int_X d_X^p (x,y) \,d\mu (y) \,.
\end{equation}
The set of global minima of $V_p$ (the global minimum may not be unique---see Example \ref{E:circle}) is known as the {\em $p$-barycenter} of $\mm{X}$, also as the {\em Fr\'{e}chet mean} for $p=2$ and the {\em Fr\'{e}chet median} for $p=1$. 
As in \cite{hang2019ffunction}, we replace $V_p$ with its $p$th root $\ct_p \colon X \to \real$, $\ct_p (x) \coloneqq (V_p (x))^{1/p}$, and call it the {\em $p$-deviation function} of $\mm{X}$. Clearly, the minimum sets of $V_p$ and $\ct_p$ are the same, so we view $p$-barycenters as minimizers of $\ct_p$. This is done because $\ct_p$ has better analytical properties than $V_p$. As local minimum sets are also important in our formulation, henceforth we relax the terminology to also include the local minima of $V_p$ in the $p$-barycenter of $\mm{X}$.

Barycenters have received significant attention in the literature. The existence and uniqueness of global minima of $V_2$ on Hadamard manifolds (that is, complete, non-positively curved, simply-connected Riemmanian manifolds) was already known to Cartan, whereas local minima of $V_2$ were first investigated in the Riemmanian setting by Grove and Karcher \cite{grka1973}, with subsequent developments in \cite{grkaruh1974,grkaruh1974a,karcher1977}. Barycenters for probability distributions on Wasserstein space have been investigated in \cite{agueh2011,kim2017}. Other studies of barycenters include \cite{batpat2003,batpat2005,afsari2011,le2001,arnaudon2014,huckemann2024}, and \cite{hang2019ffunction} provides a persistent homology take on $p$-deviation functions. For additional historical remarks on developments related to barycenters, including applications, the reader may consult \cite{afsari2011,huckemann2024}. 

\begin{wrapfigure}[14]{r}{0.5\textwidth} 
\vspace{-30pt}
  \begin{center}
    \includegraphics[width=0.48\textwidth]{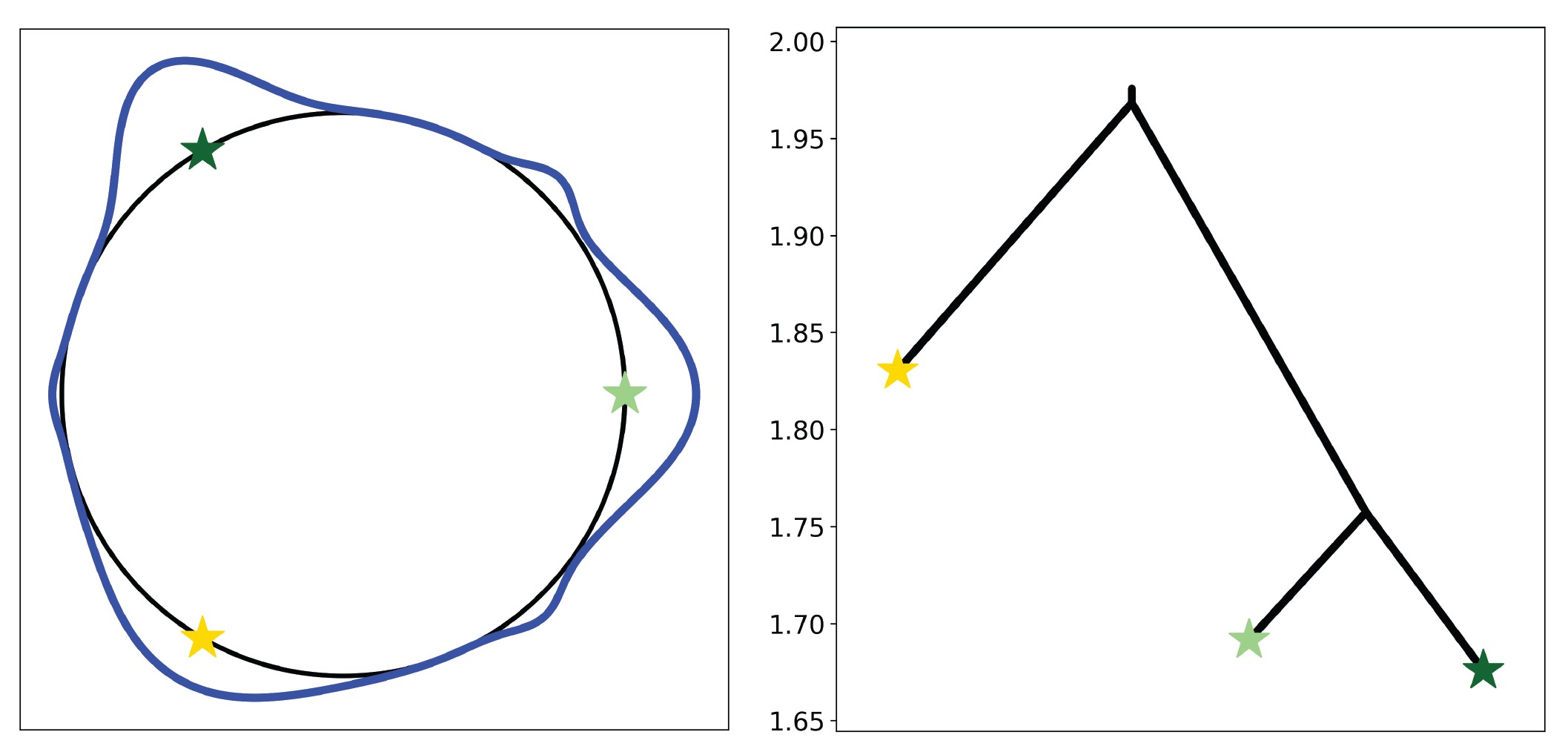}
    \caption{\small Barycentric merge tree example. {\bf Left:} A probability density on the circle. The points marked by $\star$ are local minima of $\sigma_2$, with the darkest indicating the (global) Fr\'{e}chet mean. {\bf Right:} The barycentric merge tree summarizes the local minima, together with the shape of deviation function.}
    \label{fig:BMT_example}
  \end{center}
  \vspace{-17pt}
\end{wrapfigure} 

In spite of the above, as is illustrated in Examples \ref{E:circle} and \ref{E:circle_median}, the global minima of $\ct_p$ (or equivalently, $V_p$) can exhibit unstable behavior that makes them unsuited for data representation and analysis. Although not as pronounced, this instability persists even if we view the barycenter as comprising all local minima of $\ct_p$. For this reason, this paper seeks to construct a representation of barycenters that is as simple as possible and yet robust, with guarantees of stability and consistency. This leads us to refine barycenters and define richer {\em barycentric merge tree (BMT)} representations of $\mm{X}$, show that they are stable, prove a consistency result for BMTs, and map a pathway to discrete models and computation. Roughly, the structure of the BMT representation of $\mathcal{X}$ is as follows: the BMT is a rooted tree whose leaves represent the connected components of the local minimum sets of $\ct_p$ (i.e., of the $p$-barycenter), such that the tree structure captures interrelationships and connectivity properties of the sets of global and local minima. A BMT thus provides a more complete statistical summary of the $mm$-space $(X,d_X,\mu)$ than just the barycenter---see Figure~\ref{fig:BMT_example} for an example on the circle.

\begin{figure}
    \centering
    \begin{overpic}[abs,unit=1mm,scale=.305]{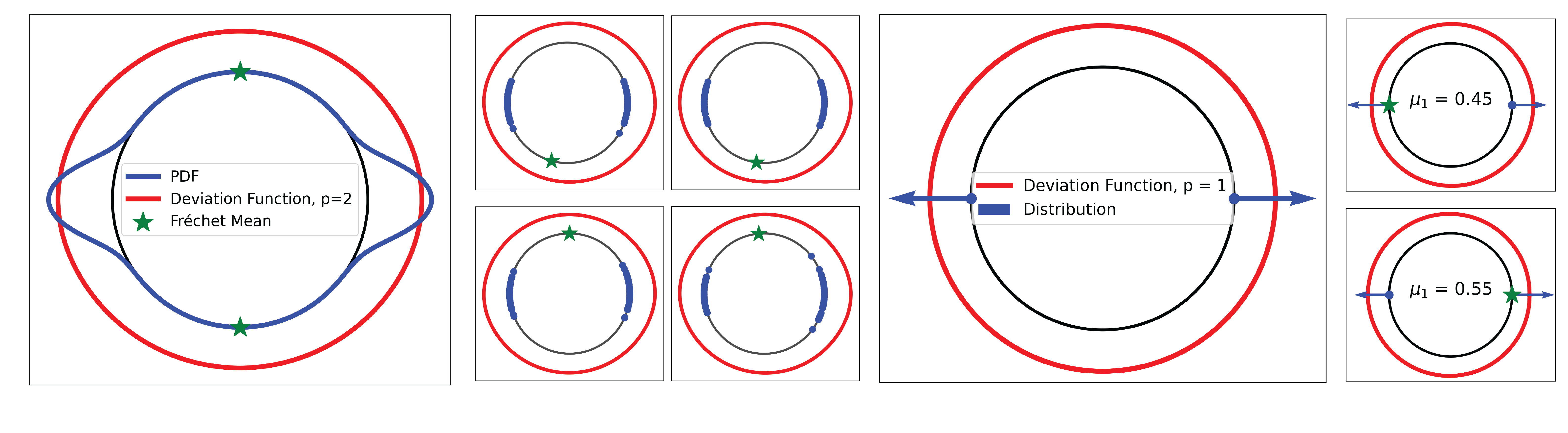} \put(22,0){(a)}
    \put(68,0){(b)}
    \put(113,0){(c)}
    \put(149,0){(d)}\end{overpic}
    \caption{\small Instability of global minima of $\ct_p$ on the circle. {\bf (a)} A pdf on the circle, its corresponding deviation function $\ct_2$, and the global minimizers of $\ct_2$. The deviation function and the pdf are both  rescaled in this plot for visual clarity. {\bf (b)} Four draws of 100 samples from the distribution shown in (a); $\ct_2$ is plotted for each empirical distribution, along with its global minimizer, which varies wildly between samples. {\bf (c)} A distribution on the circle consisting of equally weighted Dirac masses supported on antipodal points (represented by equal length arrows) and its deviation function $\ct_1$. Here, $\ct_1$ is constant, so every point on the circle is a minimizer (i.e., a median). {\bf (d)} Measures consisting of Dirac masses where the weight on the mass at $(1,0)$ is given by $\mu_1$, together with their associated deviation functions $\sigma_1$. If $\mu_1 > 1/2$, the unique median lies at $(1,0)$, while the median lies at $(-1,0)$ if $\mu_1 < 1/2$, indicated with a star in either case.}
    \label{fig:FrechetMeansCircle}
\end{figure}

\begin{example}[Instability of Means]\label{E:circle}
This example illustrates the instability of global minima of $\ct_2$. Take the underlying metric space to be the unit circle endowed with geodesic distance and let $\mu$ be the distribution with two modes shown in Figure \ref{fig:FrechetMeansCircle}(a), which also displays the deviation function $\sigma_2$ of $\mu$ and the Fr\'{e}chet mean set formed by the north and south pole of the circle. We sample 100 points from $\mu$ and calculate the global Fr\'echet means of the associated empirical distributions. Figure \ref{fig:FrechetMeansCircle}(b) shows that the resulting Fr\'echet mean bounces between a point near the north pole and a point near the south pole, depending on the particular sample. This shows, qualitatively, that the Fr\'echet mean is unstable. Moreover, this same behavior persists as the number of samples is increased, as illustrated quantitatively in Example \ref{ex:quantitative}. 
\hfill $\Diamond$
\end{example}

\begin{example}[Instability of Medians]\label{E:circle_median}
To see that medians are unstable, we once again consider an example where the metric space is the unit circle, endowed with geodesic distance. Let $\mu$ be a measure consisting of a weighted sum of Dirac masses---one with weight $\mu_1$ at the point $(1,0)$ and another with weight $\mu_2 = 1- \mu_1$ at $(-1,0)$. Parameterizing the circle by angle $\varphi \in [0,2\pi)$, with $\varphi=0$ corresponding to $(1,0)$, it is not hard to show that $\ct_1$ is given explicitly by
\begin{equation}
\ct_1(\varphi) = \left\{\begin{array}{ll}
(\mu_1-\mu_2)\varphi + \mu_2 \pi & 0 \leq \varphi \leq \pi \\
(\mu_2 - \mu_1)\varphi + (2\mu_1 - \mu_2) \pi & \pi \leq \varphi < 2\pi.
\end{array}\right.
\end{equation}
If $\mu_1 = \mu_2 = 1/2$ then $\ct_1$ is constant, as is illustrated in Figure \ref{fig:FrechetMeansCircle}(c), so all points on the circle are medians of $\mu$. On the other hand, if $\mu_1 > 1/2$ then the unique minimizer of $\ct_1$ (i.e., the median) lies at $(1,0)$ and if $\mu_1 < 1/2$ then the unique minimizer lies at $(-1,0)$. Examples are shown in Figure \ref{fig:FrechetMeansCircle}(d).
\hfill $\Diamond$
\end{example}

\begin{figure}
    \centering
    \begin{overpic}[abs,unit=1mm,scale=.285]{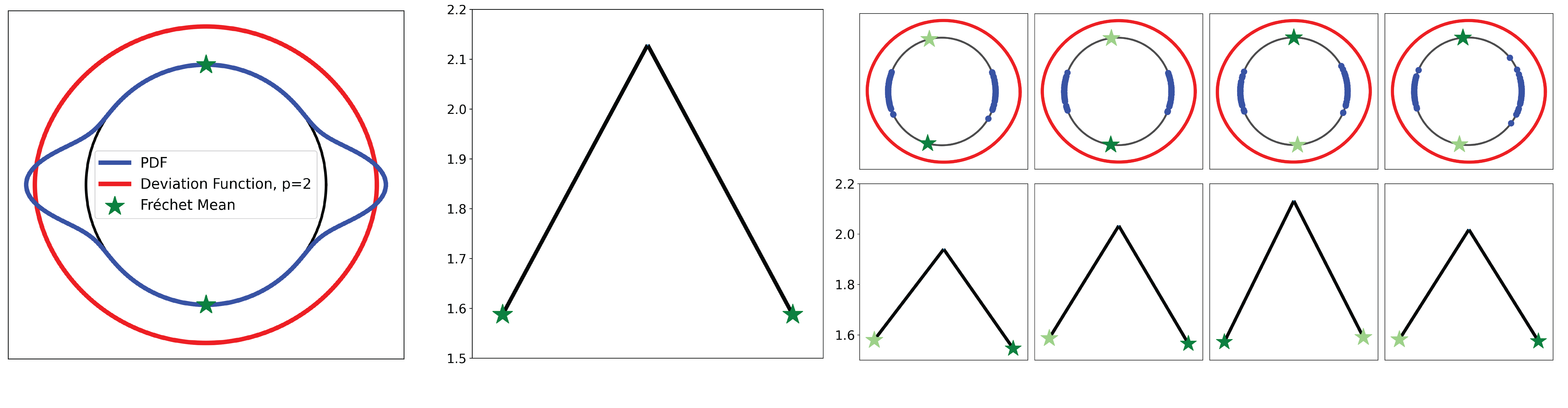} \put(22,0){(a)}
    \put(65,0){(b)}
    \put(124,0){(c)}
    \end{overpic}
    \caption{\small Stability of Barycentric Merge Trees on the circle. {\bf (a)} A circle with a bimodal distribution, its deviation function $\ct_2$ and the Fr\'echet mean set, as in Figure \ref{fig:FrechetMeansCircle}. {\bf (b)} The BMT for the distribution in (a); the leaves of the tree correspond to the global minima of $\ct_2$ (indicated by the stars), and the height of the merge point at the top corresponds to the maximum value of $\ct_2$. {\bf (c)} BMTs for empirical distributions drawn from the pdf. Leaves of the trees correspond to marked points on the circles. Observe that the merge trees all have similar structure, reflecting the stability of the BMTs as summaries of the behavior of the deviation function.}
    \label{fig:StabilityOfTrees}
\end{figure}

\begin{example}[Stability of Barycentric Merge Trees]
    We reconsider the distributions from Example \ref{E:circle} and Figures \ref{fig:FrechetMeansCircle}(a)(b). Figure \ref{fig:StabilityOfTrees} shows the barycentric merge trees associated to the distributions. One can intuitively see that the resulting structures share a common structure; this stability property is a reflection of the Lipschitz bound established in Theorem \ref{T:stab}. 
\hfill $\Diamond$
\end{example}

\subsection{Modes}

The representation and estimation of the {\em modes} of a probability distribution on a metric space $(X,d_X)$ are problems conceptually analogous to their counterparts for barycenters, as modes can be thought of as the most central points within pockets of probability mass. The shared centrality theme suggests a close connection between barycenters and modes, the main contrast being that, for modes,  centrality takes a more localized form. However, even in very simple cases as Example \ref{E:circle}, barycenters as defined above can lie in ``data deserts'', away from regions of concentration of mass and contrary to the very concept of modes. For this reason, we broaden the definition of Fr\'{e}chet variance and deviation functions to be able to frame the barycenter and mode problems as one. Instead of defining variance and deviation functions only with respect to the base metric $d_X$, in \eqref{E:mdistance}, we allow other (pseudo) metrics $\fdist \colon X \times X \to \real$. Then, $V_p$ takes the form 
\begin{equation}
V_p (x) = \mathbb{E}_\mu[\theta^p(x,\cdot)] = \int_X \theta^p(x,y) d\mu(y).
\end{equation}

\begin{wrapfigure}[14]{r}{0.45\textwidth} 
\vspace{-20pt}
  \begin{center}
    \includegraphics[width=0.43\textwidth]{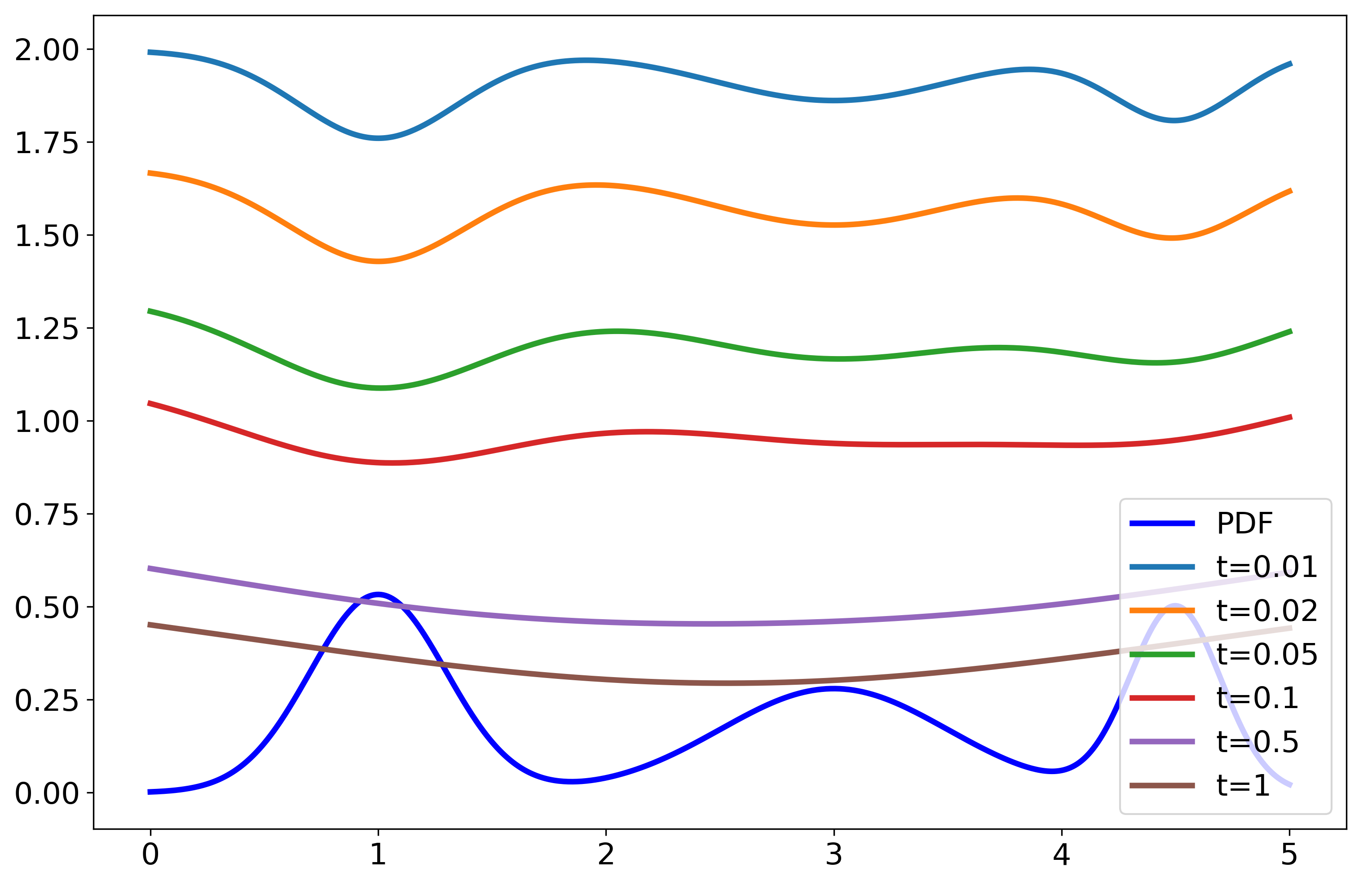}
    \caption{\small Fr\'echet variance functions associated with the heat kernel at different scales for a pdf on the real line.}
    \label{fig:RealLine}
  \end{center}
  \vspace{-10pt}
\end{wrapfigure}

As the notions of pockets of mass or clusters are inherently scale dependent, in mode detection, we employ diffusion distances $\fdist$ derived from kernel functions $k \colon X \times X \to \real^+$ \cite{coifman2006}, as detailed in Section \ref{S:modes}. The value $k(x,y)$ can be thought of as quantifying the level of communication between the points $x$ and $y$, so that if both points lie in the support of the measure, mass near $x$ and mass near $y$ get clustered together by the kernel if $k(x,y)$ is sufficiently large. This produces a chaining effect that places a sequence of sufficiently $\fdist$-close local pockets of mass all into the same cluster. This diffusion distance formulation allows us to approach modes as a barycenter problem. For Riemmanian manifolds with the geodesic distance, the heat kernel associated with the Laplace-Beltrami operator gives a natural one-parameter family of kernels $k_t$, $t>0$, to employ in mode analysis. We thus obtain a family of BMTs representing $(X,d_X,\mu)$, parameterized by $t>0$. In the tradition of scale space approaches to data analysis (cf.\,\cite{lindeberg1994,chaudmarron2000}), different structural properties are captured across the range of scales and the family can also reveal scales at which the barycentric merge trees undergo ``phase transitions''. Figure \ref{fig:RealLine} illustrates the connection between diffusion distances and modes of a distribution on the real line. The  variance function $V_2$ is calculated with respect to diffusion distances derived from the heat kernel for several values of $t>0$ (see Section \ref{S:modes} for details and also \cite{diaz2019probing} for additional examples). Observe that, for small values of $t$, the local minima of the Fr\'echet variance now correspond to local maxima (i.e., local modes) of the pdf. This illustrates how the framework proposed in this paper simultaneously applies to the barycenter and mode estimation problems.

\begin{example}[Modes and BMTs]
Figure \ref{fig:mode_merge_trees} shows the local mode detection pipeline applied to the circle distribution from Figure \ref{fig:BMT_example}. Here an exponential kernel, depending on a scale parameter $t > 0$, is applied to the geodesic distance, which in turn determines the diffusion distance. Merge trees for the associated deviation function $\ct_2$ are shown, together with the associated local modes on the circle (i.e., points corresponding to leaves on the trees). Increasing the scale parameter gives a coarser picture of the mode structure.
\hfill $\Diamond$
\end{example}

\begin{figure}
    \centering
    \includegraphics[width=\linewidth]{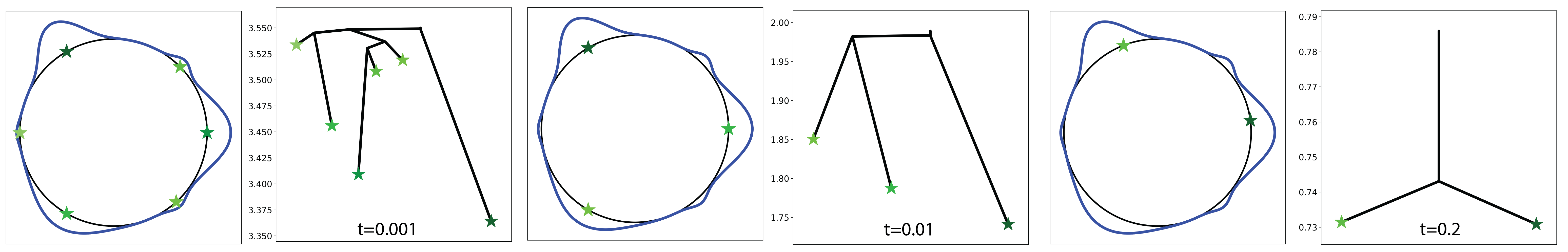}
    \caption{\small Merge trees for modes: each pair shows the distribution on the circle from Figure \ref{fig:BMT_example} with modes indicated by $\star$. The BMTs are calculated with respect to diffusion distances depending on an exponential kernel that has a scale parameter $t>0$. The results shown are for various values of this parameter.}
    \label{fig:mode_merge_trees}
\end{figure}

\subsection{Barycentric Merge Trees}

Continuous functions $f \colon X \to \real$ defined on connected domains can be succinctly represented by {\em merge trees} $T_f$, certain silhouettes of $f$ visualized as rooted trees, that have been studied primarily from a metric perspective (cf.\,\cite{morozov2013,beketayev2014,curry2022}). The present approach to barycentric merge trees differs from the purely metric view as it is based on a metric-measure formulation. To our knowledge, such a formulation has only been considered in a heuristic manner in the merge-tree literature, as in \cite{curry2022}. $T_f$ is the quotient space of $X$ under the equivalence relation $x \sim y$ if there exists $t \in \real$ such that $f(x)=f(y)=t$ and both $x$ and $y$ lie in the same connected component of the sublevel set $f^{-1}((-\infty,t])$. By construction, $f$ descends to a function $\hat{f} \colon T_f \to \real$. Using merge heights for pairs of points in $T_f$, as measured by $\hat{f}$-values, one can equip $T_f$ with a (pseudo) metric $d_f$ \cite[Definition 16]{curry2022} analogous to cophenetic distances for phylogenetic trees and dendograms \cite{sokal1962}. This turns a merge tree into a functional (pseudo) metric space $(T_f, d_f, \hat{f})$ and a functional variant of the Gromov-Hausdorff distance provides a means for quantifying similarities and contrasts between two merge trees $(T_f,d_f, \hat{f})$ and $(T_g,d_g, \hat{g})$, as done in \cite{curry2024} in the more general setting of Reeb graphs. A key difference between this type of metric and, say, the sup metric applied directly to $f$ and $g$, is the geometry captured in a more explicit and compact form. The approach to merge trees in \cite{morozov2013,beketayev2014} is different and based on interleaving distances.

The merge tree $T_p$ associated with a $p$-deviation function $\ct_p \colon X \to \real$ not only is equipped with a metric structure $d_p$ and a function $\hat{\sigma}_p \colon T_p \to \real$, as described above, but also with a probability measure $\mu_p$, namely, the pushforward of the original distributions $\mu$ on $X$ to the quotient space $T_p$. We thus have a representation of the original $mm$-space $\mm{X} = (X,d_X,\mu)$ by a summary (functional) $mm$-space $(T_p,d_p,\mu_p, \hat{\sigma}_p)$. This additional probabilistic structure enables us to analyze variation in barycentric merge trees in a \emph{(functional) Gromov-Wasserstein framework}~\cite{memoli2011,vayer2020fused,anbouhi2024}, instead of taking a purely metric approach. Our main stability result, obtained in Theorem \ref{T:stab}, asserts that if $\mu$ and $\mu'$ are probability measures on $X$, then the distance between their barycentric merge trees, viewed as functional $mm$-spaces, admits an upper bound of the order of the Wasserstein distance between $\mu$ and $\mu'$. Corollary \ref{C:convergence} records the fact that this form of stability implies consistency, and we also obtain estimates for the rate of convergence of empirical barycentric merge trees to their theoretical population model. These rates are derived directly from the Stability Theorem and results by Weed and Bach on rates for Wasserstein convergence of empirical measures \cite{weed2019}. Stability and consistency for modes are included in these results because we frame modes as special cases of barycenters.

As the deviation functions for empirical measures are defined on the entire space $X$, we also propose a discretization scheme for empirical barycentric merge trees, which are the BMTs of practical interest. For $X$ compact, starting from a finite $\delta$-fine grid $V \subseteq X$, $\delta>0$, we construct a combinatorial barycentric merge tree whose vertex set support a (functional) metric measure structure that is $\delta$-close to the BMT of $\mu_n$ with respect to a Gromov-Wasserstein type distance. Since the combinatorial trees so obtained can be rather complex, with as many vertices as the cardinality of $V$, we also discuss a simplification method that gives a provably accurate approximation.

\subsection{Organization}

Section \ref{S:prelim} defines Fr\'{e}chet variance functions and $p$-deviation functions in a form that allows for a joint treatment of barycenters and modes, and also discusses connectivity properties on $X$ that are needed to obtain stable barycentric merge trees. Section \ref{S:bmt} constructs barycentric merge trees as (pseudo) metric measure spaces and examines some properties they satisfy. Section \ref{S:stabcon} proves the main results of the paper, namely, stability and consistency of barycentric merge trees. The application to analysis of modes is developed in Section \ref{S:modes}, and Section \ref{S:discrete} is devoted to discrete models. Section \ref{S:discussion} closes the paper with a summary and some discussion.


\subsection*{Acknowledgements}

The authors thank Nelson A. Silva for carefully reading the manuscript and suggesting several improvements to the text. This research was partially done while WM was visiting the Institute for Mathematical Sciences, National University of Singapore in 2024. TN was partially supported by NSF grants DMS--2107808 and DMS--2324962.


\section{Preliminaries} \label{S:prelim}

In this paper, unless otherwise stated, $(X,d_X)$ is a connected and locally path-connected Polish metric space (Polish meaning complete and separable). For spaces satisfying these connectivity hypotheses, we repeatedly use the fact that any open, connected subset $U \subseteq X$ is path connected. To simplify notation, for $w, z \in \real$, we adopt the abbreviations $w\vee z = \max\{w,z\}$ and $w \wedge z = \min\{w,z\}$ throughout the text. 


\subsection{Fr\'{e}chet Variance and Deviation Functions} \label{S:ffunction}

As explained in the Introduction, we take an approach to detection and estimation of modes of a probability distribution on a metric space $(X,d_X)$ that requires versions of the Fr\'{e}chet variance function based on distance functions other than $d_X$. For this reason, we define Fr\'{e}chet variance with respect to more general pseudo metrics $\fdist \colon X \times X \to \real$, as this lets us frame modes as special cases of barycenters. A discussion of pseudo metrics derived from kernel functions and well suited to mode analysis is presented in Section \ref{S:modes}.

\begin{definition} \label{D:fvariance}
Let $\mm{X} = (X,d_X,\mu)$ be an $mm$-space with $\mu \in \borel{X,d_X;p}$, $p \geq 1$.    
\begin{enumerate}[\rm (i)]
\item A pseudo-metric $\fdist \colon X \times X \to \real$ is {\em $L$-admissible}, $L>0$, if $\fdist(x,y) \leq L d_X (x,y)$, $\forall x,y \in X$. We refer to such $L$ as an {\em admissibility constant} for $\fdist$. We say that $\fdist$ is \emph{admissible} if it is $L$-admissible for some $L > 0$.
\item The {\em Fr\'{e}chet $p$-variance function} $V_p \colon X \to \real$ associated with an admissible $\fdist$ is defined by
\[
V_p (x)\coloneqq \int_X \fdist^p(x,y) \, d\mu(y).
\]
The fact that $\mu$ has finite $p$-moments ensures that $V_p$ is well defined. We omit $\fdist$ from the Fr\'{e}chet function notation to keep it simple, as the choice of $\fdist$ should always be clear from the context.
\item The {\em $p$-deviation function} $\ct_p \colon X \to \real$ is defined as $\ct_p (x)\coloneqq \big(V_p(x)\big)^{1/p}$.
\end{enumerate}
\end{definition}

The standard Fr\'{e}chet variance function is that associated with $\fdist=d_X$ and $p=2$. The following proposition describes a basic property of deviation functions needed in our stability arguments.

\begin{proposition} \label{P:lip}
If $\fdist$ is an admissible pseudo metric and $\mu \in \borel{X,d_X;p}$, then $|\ct_p(x)-\ct_p(y)| \leq\fdist(x,y)$, for any $x,y \in X$.
\end{proposition}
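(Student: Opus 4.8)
The plan is to read the deviation function as an $L^p$-norm and then reduce the claim to combining Minkowski's inequality with the triangle inequality satisfied by the pseudo metric $\fdist$. Concretely, for fixed $x \in X$ I would write $\ct_p(x) = \left(\int_X \fdist^p(x,z)\,d\mu(z)\right)^{1/p} = \|\fdist(x,\cdot)\|_{L^p(\mu)}$, where $\fdist(x,\cdot)$ denotes the function $z \mapsto \fdist(x,z)$. Admissibility of $\fdist$ together with the finite $p$-moment hypothesis guarantees that this function lies in $L^p(\mu)$: since $\fdist^p(x,z) \leq L^p d_X^p(x,z)$, we get $\int_X \fdist^p(x,z)\,d\mu(z) \leq L^p \int_X d_X^p(x,z)\,d\mu(z) < \infty$. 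Hence $\ct_p(x)$ and $\ct_p(y)$ are genuine (finite) $L^p(\mu)$-norms and, because $p \geq 1$, the reverse triangle inequality for norms is available.

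First I would invoke Minkowski's inequality in its reverse triangle form,
\[
|\ct_p(x) - \ct_p(y)| = \big|\, \|\fdist(x,\cdot)\|_{L^p(\mu)} - \|\fdist(y,\cdot)\|_{L^p(\mu)} \,\big| \leq \|\fdist(x,\cdot) - \fdist(y,\cdot)\|_{L^p(\mu)},
\]
which collapses the difference of the two deviation values into a single $L^p$-norm of the difference of integrands.

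Next I would control that integrand pointwise. Since $\fdist$ is a pseudo metric, for every $z \in X$ the triangle inequality gives both $\fdist(x,z) \leq \fdist(x,y) + \fdist(y,z)$ and its symmetric companion, whence $|\fdist(x,z) - \fdist(y,z)| \leq \fdist(x,y)$. This estimate is uniform in $z$, so substituting it into the norm yields
\[
\|\fdist(x,\cdot) - \fdist(y,\cdot)\|_{L^p(\mu)} \leq \fdist(x,y)\,\big(\mu(X)\big)^{1/p} = \fdist(x,y),
\]
where the final equality uses that $\mu$ is a probability measure, so $\mu(X) = 1$. Chaining the two displays delivers $|\ct_p(x) - \ct_p(y)| \leq \fdist(x,y)$, as required.

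Since each step is an application of a standard inequality, I do not anticipate a genuine obstacle. The only points that demand care are the well-definedness at the outset—precisely what admissibility buys us—and the bookkeeping to ensure the pointwise bound $|\fdist(x,z) - \fdist(y,z)| \leq \fdist(x,y)$ is applied uniformly inside the integral \emph{before} taking the $p$th root, so that the probability-measure normalization $\mu(X) = 1$ can absorb the constant cleanly.
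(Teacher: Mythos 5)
Your proposal is correct and follows essentially the same route as the paper: both apply Minkowski's inequality (in reverse triangle form) to the $L^p(\mu)$-norms $\ct_p(x)$ and $\ct_p(y)$, bound the integrand pointwise by $|\fdist(x,z)-\fdist(y,z)| \leq \fdist(x,y)$ via the pseudo-metric triangle inequality, and conclude using $\mu(X)=1$. Your explicit remarks on well-definedness via admissibility and the finite $p$-moment hypothesis are a minor elaboration the paper handles in Definition \ref{D:fvariance}, but the argument is identical.
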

\begin{proof}
For $x,y \in X$, by the Minkowski inequality we have that
\begin{equation}
\begin{split}
|\ct_p (x) - \ct_p (y)| &= 
\Big| \Big( \int_X \fdist^p (x,z) \, d\mu (z) \Big)^{1/p} - \Big( \int_X \fdist^p (y,z) \, d\mu (z) \Big)^{1/p} \Big| \\
&\leq \Big( \int_X |\fdist(x,z) - \fdist(y,z)|^p \, d\mu (z) \Big)^{1/p} \leq \Big( \int_X  \fdist^p(x,y) \, d\mu (z) \Big)^{1/p} = \fdist(x,y).
\end{split}
\end{equation}
\end{proof}


\subsection{Connectivity Modulus}

This section introduces the notions of {\em connectivity constant} and {\em connectivity modulus} for an admissible pseudo metric $\fdist \colon X \times X \to \real$ on $(X,d_X)$. Let $I=[0,1]$. Given $x,y \in X$, let $\Gamma(x,y)$ denote the collection of all (continuous) paths $\gamma \colon I \to (X,d_X)$ such that $\gamma(0)=x$ and $\gamma(1)=y$. Define the {\em merge distance function} $r_\fdist \colon X \times X \to \real$ by
\begin{equation} \label{E:mergec}
r_\fdist (x,y) \coloneqq \inf_{\gamma \in \Gamma(x,y)} \sup_{t\in I} \,\fdist(x,\gamma(t)) \vee \fdist(\gamma(t),y).
\end{equation}
For $\theta=d_X$, we use the notation $r_X$ for the merge distance function. Clearly, 
\begin{equation}
\sup_{t\in I} \fdist(x,\gamma(t)) \vee \fdist(\gamma(t),y) \geq \fdist(x,\gamma(0)) \vee \fdist (\gamma(0),y) = \fdist(x,y),
\end{equation}
so that $r_\fdist (x,y) \geq \fdist (x,y)$, for any $x,y \in X$. In particular, $r_X \geq d_X$. By reversing path orientation, one readily verifies that $r_\fdist$ is a symmetric function. Intuitively, $r_\theta(x,y)$ quantifies how far out, as measured by $\fdist$, we have to go from $x$ and $y$ to be able to connect $x$ and $y$ by a path.

\begin{definition} \label{D:modulus}
Let $(X,d_X)$ be a metric space, $\fdist \colon X \times X \to \real$ an admissible pseudo metric, and $K>0$.
\begin{enumerate}[\rm (i)]
 \item $K$ is a {\em $\fdist$-connectivity constant} for $(X,d_X)$ if $r_\fdist(x,y) \leq K d_X(x,y)$, $\forall x,y \in X$. If $\fdist=d_X$, then $K \geq 1$ and we refer to $K$ simply as a {\em connectivity constant} for $(X,d_X)$.
 \item The {\em $\fdist$-connectivity modulus} $K_\fdist$ of $(X,d_X)$ is defined as
 \[
 K_\fdist\coloneqq \sup_{\substack{x,y \in X \\ x\ne y}} \frac{r_\fdist (x,y)}{d_X (x,y)}\,.
 \]
 For $\fdist=d_X$, we write $K_\fdist = K_X$ and call $K_X$ the {\em connectivity modulus} of $(X,d_X)$.
 \end{enumerate}
\end{definition}

If the $\fdist$-connectivity modulus is finite, $K_\fdist$ is the smallest $\fdist$-connectivity constant for $(X,d_X)$.

\begin{proposition} \label{P:geodesic}
If $(X, d_X)$ is a geodesic space, then $K_X = 1$.
\end{proposition}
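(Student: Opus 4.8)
The plan is to establish the two inequalities $K_X \geq 1$ and $K_X \leq 1$ separately. The lower bound is already in hand: the excerpt shows $r_X(x,y) \geq d_X(x,y)$ for all $x,y$, so $r_X(x,y)/d_X(x,y) \geq 1$ whenever $x \neq y$, and hence $K_X \geq 1$ directly from Definition \ref{D:modulus}. Thus the entire content of the statement is the reverse inequality $r_X(x,y) \leq d_X(x,y)$, which by the definition of $K_X$ forces $K_X \leq 1$ and therefore $K_X = 1$.

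To prove $r_X(x,y) \leq d_X(x,y)$, I would exhibit a single path that realizes the bound, rather than analyzing the infimum over all of $\Gamma(x,y)$. Fix distinct $x,y \in X$. Since $(X,d_X)$ is geodesic, there is a geodesic segment joining $x$ to $y$; reparametrizing it at constant speed on $I=[0,1]$ produces a path $\gamma \in \Gamma(x,y)$ with $\gamma(0)=x$, $\gamma(1)=y$, and $d_X(\gamma(s),\gamma(t)) = |s-t|\, d_X(x,y)$ for all $s,t \in I$. This $\gamma$ is a legitimate competitor in the infimum \eqref{E:mergec}.

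The key (and essentially only) observation is that the distance to either endpoint along this geodesic never exceeds the total length. Indeed, for every $t \in I$,
\[
d_X(x,\gamma(t)) = t\, d_X(x,y) \leq d_X(x,y), \qquad d_X(\gamma(t),y) = (1-t)\, d_X(x,y) \leq d_X(x,y),
\]
so that $d_X(x,\gamma(t)) \vee d_X(\gamma(t),y) \leq d_X(x,y)$ uniformly in $t$. Taking the supremum over $t \in I$ and then the infimum over $\Gamma(x,y)$ yields $r_X(x,y) \leq d_X(x,y)$, as required.

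There is no serious obstacle here; the only point demanding a little care is the reparametrization, ensuring that the chosen geodesic is genuinely an element of $\Gamma(x,y)$ (a continuous map defined on $I$ with the correct endpoints) so that it may legitimately serve as a competitor in \eqref{E:mergec}. With that bookkeeping in place, the two inequalities combine to give $r_X \equiv d_X$ on geodesic spaces, and hence $K_X = 1$.
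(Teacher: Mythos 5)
Your proposal is correct and follows essentially the same route as the paper: the lower bound comes from the general inequality $r_X \geq d_X$, and the upper bound from using a geodesic from $x$ to $y$ as a competitor in the infimum, where the paper bounds $d_X(x,\gamma(t)) \vee d_X(\gamma(t),y)$ by the sum $d_X(x,\gamma(t)) + d_X(\gamma(t),y) = d_X(x,y)$ while you compute the two distances explicitly via the constant-speed parametrization --- a cosmetic difference only.
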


\begin{proof}
We show that $r_X = d_X$, which implies that $K_X=1$. As noted above, $r_X \geq d_X$. For the opposite inequality, let $\gamma_{x,y} \colon I \to X$ be a geodesic from $x$ to $y$. Then,
\begin{equation}
\begin{split}
d_X (x,\gamma_{x,y}(t)) \vee d_X (\gamma_{x,y}(t), y) &\leq d_X (x,\gamma_{x,y}(t)) + d_X (\gamma_{x,y}(t), y) = d_X (x,y) \,,
\end{split}
\end{equation}
$\forall t \in I$. Taking the infimum over $\gamma \in \Gamma(x,y)$, we obtain $r_X (x,y) \leq d_X (x,y)$.
\end{proof}

\begin{proposition} \label{P:kmodulus}
Let $\fdist \colon X \times X \to \real$ be an $L$-admissible pseudo metric. If $K\geq 1$ is a connectivity constant for $(X,d_X)$, then $KL$  is a $\fdist$-connectivity constant for $(X,d_X)$; that is,
$r_\fdist (x,y) \leq KL d_X(x,y)$.
\end{proposition}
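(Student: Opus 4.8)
The plan is to reduce everything to a pointwise comparison between the two merge distance functions $r_\fdist$ and $r_X$, exploiting the fact that scaling by the positive constant $L$ commutes with the binary maximum $\vee$. Concretely, I would first fix $x,y \in X$ and an arbitrary path $\gamma \in \Gamma(x,y)$. For each $t \in I$, the $L$-admissibility hypothesis gives $\fdist(x,\gamma(t)) \leq L\, d_X(x,\gamma(t))$ and $\fdist(\gamma(t),y) \leq L\, d_X(\gamma(t),y)$. Since $L>0$, taking the maximum of the two left-hand sides against the two right-hand sides preserves the inequality and lets me factor out $L$, yielding
\[
\fdist(x,\gamma(t)) \vee \fdist(\gamma(t),y) \;\leq\; L\bigl( d_X(x,\gamma(t)) \vee d_X(\gamma(t),y) \bigr)
\]
for every $t \in I$.

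Next I would take the supremum over $t \in I$ on both sides. Because the right-hand side dominates the left pointwise in $t$ and $L$ is a fixed positive constant, the supremum of the left is at most $L$ times the supremum of the right, so
\[
\sup_{t\in I} \,\fdist(x,\gamma(t)) \vee \fdist(\gamma(t),y) \;\leq\; L \sup_{t\in I} \bigl( d_X(x,\gamma(t)) \vee d_X(\gamma(t),y) \bigr).
\]
Taking the infimum over all $\gamma \in \Gamma(x,y)$ then gives $r_\fdist(x,y) \leq L\, r_X(x,y)$; the crucial point legitimizing this term-by-term comparison is that the path family $\Gamma(x,y)$ is \emph{identical} in the definitions of $r_\fdist$ and $r_X$, so no reparametrization or path-replacement is needed. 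Finally, invoking that $K$ is a connectivity constant for $(X,d_X)$, i.e.\ $r_X(x,y) \leq K\, d_X(x,y)$, I would conclude $r_\fdist(x,y) \leq KL\, d_X(x,y)$, as claimed.

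I do not anticipate a genuine obstacle: the proof is a short monotonicity/homogeneity chain. The only steps deserving a moment's care are the commutation of the scalar $L$ with the binary maximum and then with the $\sup$ and $\inf$ operations, all of which are valid precisely because $L>0$, and the observation above that the infima are taken over the same set of paths.
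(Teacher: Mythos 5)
Your proof is correct and takes essentially the same approach as the paper's: both rest on the pointwise bound $\fdist \leq L\, d_X$ applied along paths, followed by an appeal to the connectivity constant. The only difference is organizational---you factor through the clean intermediate inequality $r_{\fdist}(x,y) \leq L\, r_X(x,y)$ by comparing infima over the same path family $\Gamma(x,y)$, whereas the paper selects an $\epsilon$-optimal path for $r_X$ and lets $\epsilon \to 0$; the two arguments are logically equivalent.
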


\begin{proof}
By assumption, for any $x,y \in X$ and $\epsilon >0$, there exists a path $\gamma \in \Gamma(x,y)$ such that $\sup_{t\in I} d_X(x, \gamma(t)) \vee d_X(\gamma(t),y) \leq \epsilon + K d_X(x,y)$. Then,
\begin{equation}
\sup_{t\in I} \fdist(x, \gamma(t)) \vee \fdist(\gamma(t),y) \leq L \sup_{t\in I} \big(d_X(x, \gamma(t)) \vee d_X(\gamma(t),y)\big) \leq L \epsilon + LK d_X(x,y) .
\end{equation}
Since $\epsilon>0$ is arbitrary, we have $\sup_{t\in I} \fdist(x, \gamma(t)) \vee \fdist (\gamma(t),y) \leq KL d_X(x,y)$, which in turn implies that $r_\fdist (x,y) \leq KL d_X(x,y)$.
\end{proof}


\section{Barycentric Merge Trees} \label{S:bmt}

This section introduces our main objects of study, barycentric merge trees. For $t \in \real$, we adopt the notation $A(t)\coloneqq \ct_p^{-1} ((-\infty, t]) = \ct_p^{-1} ([0, t])$ for the sublevel sets of $\ct_p$. The last equality holds because $\ct_p \geq 0$. 

\begin{definition} \label{D:ftree}
The {\em barycentric merge tree (BMT)} of $\mm{X}$ of order $p$ for an admissible pseudo metric $\fdist$, $p \geq 1$, is denoted $\tree{X}$ and defined as the quotient space of $X$ under the equivalence relation $x \sim y$ if there exists $t \in \real$ such that $\ct_p (x) = \ct_p (y) =t$ and $x$ and $y$ lie in the same connected component of $A(t)$. The quotient map is denoted $\alpha_p \colon X \to \tree{X}$.
\end{definition}

Points in $\tree{X}$ are in one-to-one correspondence with the connected components of the sublevel sets of $\ct_p$, as follows. For $x \in X$, let $t = \ct_p(x)$ and $\alpha_p(x)=a \in \tree{X}$. Then, the point $a$ represents the connected component $C_a$ of $A(t)$ containing $x$. Clearly, $C_a$ only depends on the equivalence class of $x$. 

\begin{figure}
    \centering
    \includegraphics[width=0.9\linewidth]{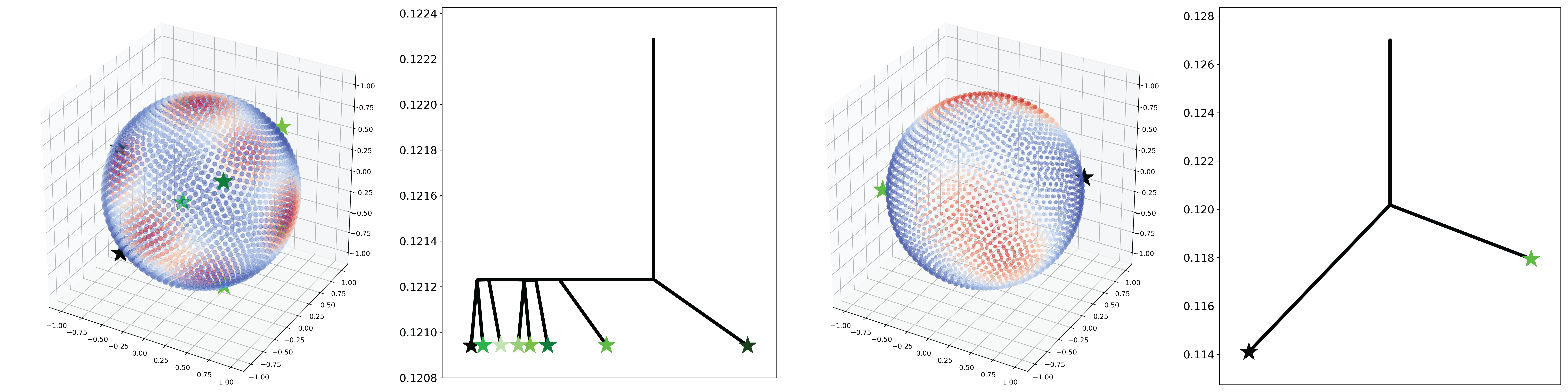}
    \caption{\small BMTs for distributions on the sphere (see Example \ref{ex:sphere}). {\bf Left:} Heat map representation of the pdf of a  highly symmetric distribution on the 2-sphere, with modes at each intersection of the sphere with the coordinate axes. The BMT for $p=2$ is shown, together with points corresponding to leaves on the sphere. {\bf Right:} A less symmetric distribution on the 2-sphere with its BMT and points corresponding to the leaves.}
    \label{fig:SphereTrees}
\end{figure}

\begin{example}[BMTs on the Sphere]\label{ex:sphere}
    We illustrate the barycentric merge tree construction for distributions on the 2-sphere in Figure \ref{fig:SphereTrees}. In each case, we use the standard geodesic distance as the metric, and consider the associated deviation function $\sigma_2$. The first example is for a highly symmetric distribution, given by a pdf which exponentially decays in distance from the set of points $\{(\pm 1,0,0),(0,\pm 1, 0), (0,0,\pm 1)\}$. The resulting BMT has leaves corresponding to the faces of the standard cross-polytope, reflecting the symmetry of the measure. These points are the local minima of $\sigma_2$. The leaves merge at a common height---corresponding to the unique local maximum value of the pdf---after which the sublevel sets of $\sigma_2$ are connected. The second example is a random transformation of the first, with no obvious symmetries, and results in a BMT with two leaves. Observe that in both examples, the computed local Fr\'echet means fall in low density areas, motivating the mode detection problem that we consider in Section \ref{S:modes}.
    
    All numerical examples in the paper are implemented via discrete approximations of the underlying spaces, following the method presented in Section \ref{S:discrete}. \hfill $\Diamond$
\end{example}

\smallskip
By construction, the deviation function $\ct_p \colon X \to \real$ descends to a function $\qct_p \colon \tree{X} \to \real$ such that the diagram
\begin{equation} \label{E:functions}
\begin{tikzcd}
X \arrow[dr,"\ct_p"'] \arrow[rr, "\alpha_p"] & & \tree{X}  \arrow[dl, "\qct_p"] \\
& \real &
\end{tikzcd}
\end{equation}
commutes (this is the function which was denoted as $\hat{\sigma}_p$ in the introduction). $\tree{X}$ can be equipped with a poset (partially ordered set) structure that we now describe.

\begin{definition} \label{D:poset}
For $a \in \tree{X}$, let $C_a \subseteq X$ be the connected component of the sublevel set of $\ct_p$ represented by $a$. 
\begin{enumerate}[(i)]
\item The partial order $\preceq$  is defined by $a \preceq b$ if $C_a \subseteq C_b$.
\item A point $a \in T_p (\mm{X})$ is a {\em leaf} if it is a minimum point of the poset; that is, if $b \preceq a$, then $a=b$.
\item Given $a,b,c \in \tree{X}$, $c$ is a {\em merge point} for $a$ and $b$ if $a \preceq c$ and $b \preceq c$.
\end{enumerate}
\end{definition}

The partial ordering $\preceq$ is compatible with the quotient topology on $\tree{X}$ in the sense that the graph of $\preceq$, given by 
\begin{equation}
\{(a,b) \in \tree{X} \times \tree{X} \colon a \preceq b\} ,
\end{equation}
is a closed subspace of $\tree{X} \times \tree{X}$. Thus, $\tree{X}$ is a {\em pospace} (partially ordered space), not just a poset. 

\begin{remark} \label{R:lowerset}
The following properties satisfied by $\preceq$ are readily verified:
\begin{enumerate}[(i)]
\item The function $\qct_p \colon \tree{X} \to \real$ is monotonic; that is, $\qct_p (a) \leq \qct_p (b)$ if $a \preceq b$;
\item For $a \in T_p(\mm{X})$, $C_a = \alpha_p^{-1} (\lset{a})$, where $\lset{a} \, \coloneqq\{b \in T_p \mm{X}) \colon b \preceq a\}$ is the {\em principal lower set} of $a$;
\item If $a \preceq b$ and $a \preceq c$, then $b\preceq c$ or $c \preceq b$; that is, $\tree{X}$ has no branching points;
\item If $a \preceq b$ and $\qct_p (a) = \qct_p (b)$, then $a =b$.
\end{enumerate}
\end{remark}

We now turn $\tree{X}$ into a (pseudo) $mm$-space starting with a (pseudo) metric on $\tree{X}$. For $a,b \in \tree{X}$, let 
\begin{equation}
\mpt(a,b) = \{c \in \tree{X} \colon a \preceq c \ \text{and} \ b \preceq c\}
\end{equation}
be the {\em merge set} of $a$ and $b$. Note that $\mpt(a,b) \ne \emptyset$. Indeed, let $x_a, x_b \in X$ be such that $\alpha_p(x_a)=a$ and $\alpha_p(x_b) = b$, and let $\gamma \colon I \to X$ be a path from $x_a$ to $x_b$. Such $\gamma$ exists because $X$ is path connected. Set $t_0 = \text{argmax}\, \ct_p(\gamma(t))$, $x_0 = \gamma(t_0)$, and $c =\alpha_p (x_0)$. Then, $c \in \mpt (a,b)$.

\begin{definition}[cf.\,\cite{curry2022,gasparovic2025intrinsic}] \label{D:treedist}
The distance function $\pdist{X} \colon \tree{X} \times \tree{X} \to \real$ is defined as
\[
\begin{split}
\pdist{X} (a,b) &\coloneqq \inf_{c \in \mpt(a,b)} \big(\qct_p (c) - \qct_p (a)\big) \vee \big(\qct_p (c) - \qct_p(b)\big) \\
&\,\, = \inf_{c \in \mpt(a,b)} \qct_p (c) - (\qct_p (a) \wedge \qct_p (b)).
\end{split}
\]
\end{definition}

The next goal is to give an alternative description of $\pdist{X}$ in terms of variation of $\ct_p$ along paths in $X$ and show that $\pdist{X}$ is indeed a pseudo metric. For $\gamma \in \Gamma(x,y)$, let
\begin{equation} \label{D:predistance}
\begin{split}
\rho(\gamma) &= \sup_{t \in I} \big(\ct_p (\gamma(t)) -\ct_p (x)\big) \vee \big(\ct_p (\gamma(t)) -\ct_p (y)\big) \\
&= \sup_{t \in I} \ct_p (\gamma(t)) - (\ct_p (x) \wedge \ct_p (y)) \,.
\end{split}
\end{equation}

\begin{remark} \label{R:trace}
$\rho (\gamma)$ only depends on the trace of $\gamma$, as it is invariant under reparameterizations of the curve in a very flexible sense. Namely, for any mapping $h \colon I \to I$ such that $h(0)=0$ and $h(1)=1$, the curve $\gamma' = \gamma \circ h$ satisfies $\rho (\gamma) = \rho (\gamma')$.
\end{remark}

\begin{definition}
For $\mu \in \borel{X,d_X;p}$, the {\em merge radius function} $\mdist{X} \colon X \times X \to \real$ of $\mm{X}= (X,d_X,\mu)$  is defined as
\[
\begin{split}
\mdist{X} (x,y) \coloneqq \inf_{\gamma \in \Gamma(x,y)} \rho (\gamma) 
= \inf_{\gamma \in \Gamma(x,y)} \sup_{t \in I} \ct_p (\gamma(t)) - (\ct_p (x) \wedge \ct_p (y)).
\end{split}
\]
\end{definition}

\begin{proposition} \label{P:mradius}
Let $\mm{X} = (X, d_X, \mu)$ be an $mm$-space with $\mu \in \borel{X,d_X;p}$, $p \geq 1$. Then, for any $x,x',y,y',z \in X$, the following statements hold:
\begin{enumerate}[{\rm(i)}]
\item $\mdist{X} (x,z) \leq \mdist{X} (x,y) + \mdist{X} (y,z)$;
\item if $\alpha_p(x) = \alpha_p(x')$, then $\mdist{X} (x,x') =0$;
\item if $\alpha_p(x) = \alpha_p(x')$ and $\alpha_p(y)=\alpha_p(y')$, then $\mdist{X} (x,y) = \mdist{X} (x',y')$.
\end{enumerate}
\end{proposition}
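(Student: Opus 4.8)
The plan is to establish (i) directly by concatenating paths, to establish (ii) by a topological argument exploiting local path-connectivity, and then to deduce (iii) formally from (i) and (ii) together with the evident symmetry of $\mdist{X}$. Throughout I will use that $\ct_p$ is continuous: by Proposition~\ref{P:lip} and admissibility of $\fdist$ one has $|\ct_p(x)-\ct_p(y)| \le \fdist(x,y) \le L\,d_X(x,y)$, so $\ct_p$ is Lipschitz. Consequently each sublevel set $A(t)=\ct_p^{-1}([0,t])$ is closed and each set $U(s)\coloneqq \ct_p^{-1}([0,s))$ is open. I also record that $\mdist{X}$ is symmetric and nonnegative: reversing orientation is a bijection $\Gamma(x,y)\to\Gamma(y,x)$ preserving $\sup_t\ct_p(\gamma(t))$, while $\ct_p(x)\wedge\ct_p(y)$ is symmetric, and $\sup_t\ct_p(\gamma(t))\ge \ct_p(x)\vee\ct_p(y)\ge \ct_p(x)\wedge\ct_p(y)$ gives $\rho(\gamma)\ge 0$.

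For (i), fix $\gamma_1\in\Gamma(x,y)$ and $\gamma_2\in\Gamma(y,z)$ and let $\gamma\in\Gamma(x,z)$ be their concatenation; by Remark~\ref{R:trace} the value of $\rho$ is insensitive to the reparameterization needed to form $\gamma$. Writing $a=\ct_p(x)$, $b=\ct_p(y)$, $c=\ct_p(z)$, the key identity is
\[
\sup_{t\in I}\ct_p(\gamma(t)) = \big(\rho(\gamma_1)+(a\wedge b)\big) \vee \big(\rho(\gamma_2)+(b\wedge c)\big),
\]
so that $\rho(\gamma)=\sup_t\ct_p(\gamma(t))-(a\wedge c)$ splits into two cases according to which term attains the maximum. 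In the first case $\rho(\gamma)=\rho(\gamma_1)+\big((a\wedge b)-(a\wedge c)\big)$; since $s\mapsto a\wedge s$ is $1$-Lipschitz one has $(a\wedge b)-(a\wedge c)\le|b-c|$, and since any path from $y$ to $z$ rises at least to $b\vee c$ one has $\rho(\gamma_2)\ge (b\vee c)-(b\wedge c)=|b-c|$, giving $\rho(\gamma)\le\rho(\gamma_1)+\rho(\gamma_2)$. The second case is symmetric, using $(b\wedge c)-(a\wedge c)\le|a-b|\le\rho(\gamma_1)$. Taking the infimum over $\gamma_1$ and then over $\gamma_2$ yields the triangle inequality.

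For (ii), suppose $\alpha_p(x)=\alpha_p(x')$, so $\ct_p(x)=\ct_p(x')=t$ and $x,x'$ lie in the same connected component $C$ of $A(t)$. The point is to upgrade the mere connectivity of $C$ to an actual path. Fix $\epsilon>0$. Then $C\subseteq A(t)\subseteq U(t+\epsilon)$, and since $U(t+\epsilon)$ is open in the locally path-connected space $X$, its connected component $V$ containing $x$ is open, hence path-connected; as $C$ is connected and contained in $U(t+\epsilon)$, it lies in a single component, necessarily $V$, so $x'\in V$. A path $\gamma_\epsilon$ from $x$ to $x'$ inside $V\subseteq U(t+\epsilon)$ satisfies $\sup_s\ct_p(\gamma_\epsilon(s))\le t+\epsilon$, whence $\rho(\gamma_\epsilon)\le\epsilon$. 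Letting $\epsilon\to 0$ and using nonnegativity gives $\mdist{X}(x,x')=0$.

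Finally, (iii) follows formally: by (i) and symmetry,
\[
\mdist{X}(x,y)\le \mdist{X}(x,x')+\mdist{X}(x',y')+\mdist{X}(y',y),
\]
and the outer terms vanish by (ii), so $\mdist{X}(x,y)\le\mdist{X}(x',y')$; interchanging the primed and unprimed points gives the reverse inequality, hence equality. The main obstacle is the topological step in (ii): connected components of the closed set $A(t)$ need not be path-connected, and it is precisely the local path-connectivity hypothesis on $X$, accessed through the open enlargement $U(t+\epsilon)$, that repairs this and produces genuine near-optimal paths.
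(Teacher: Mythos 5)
Your proof is correct and takes essentially the same route as the paper's: concatenation subadditivity $\rho(\gamma)\le\rho(\gamma_1)+\rho(\gamma_2)$ for (i), passing to the open enlargement $\ct_p^{-1}\big((-\infty,t+\epsilon)\big)$ and using local path-connectivity (open connected sets are path connected) for (ii), and the same formal triangle-inequality deduction for (iii). The only difference is cosmetic: in (i) you establish subadditivity via the exact identity for the sup over the concatenation plus a case analysis with the $1$-Lipschitzness of $s\mapsto a\wedge s$, whereas the paper bounds the two endpoint offsets $\sup_t \ct_p(\gamma(t))-\ct_p(x)$ and $\sup_t \ct_p(\gamma(t))-\ct_p(z)$ directly by inserting $\ct_p(y)$.
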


\begin{proof}
The argument is similar to that given in \cite[Proposition 4]{curry2024} for a related result for Reeb spaces. 

\smallskip

(i) Given $\gamma_1 \in \Gamma(x,y)$ and $\gamma_2 \in \Gamma(y,z)$, let $\gamma \in \Gamma(x,z)$ be the concatenation of $\gamma_1$ and $\gamma_2$. Then,
\begin{equation}
\begin{split}
\sup_{t\in I} \ct_p (\gamma (t)) - \ct_p(x) &= \big[ \sup_{t\in I} \ct_p (\gamma_1 (t)) - \ct_p(x)\big] \vee  \big[\sup_{t\in I} \ct_p (\gamma_2 (t)) - \ct_p(x)\big] \\
&= \big[ \sup_{t\in I} \ct_p (\gamma_1(t)) - \ct_p(x)\big] \vee  \big[\sup_{t\in I} \ct_p (\gamma_2(t)) - \ct_p(y) + \ct_p(y) - \ct_p(x) \big] \\
&\leq \big[ \sup_{t\in I} \ct_p (\gamma_1 (t)) - \ct_p(x)\big] +  \big[\big(\sup_{t\in I} \ct_p (\gamma_2 (t)) - \ct_p(y)\big) \big] \leq \rho(\gamma_1) + \rho(\gamma_2) \,,
\end{split}
\end{equation}
for, $\ct_p(y) - \ct_p (x) \leq \sup_{t\in I} \ct_p (\gamma_1 (t))) - \ct_p(x)$. Similarly, $\sup_{t\in I} \ct_p (\gamma (t)) - \ct_p(z) \leq \rho(\gamma_1) + \rho(\gamma_2)$. Therefore, $\rho(\gamma) \leq \rho(\gamma_1) + \rho(\gamma_2)$, which implies that
\begin{equation}
\mdist{X}(x,z) =  \inf_{\gamma \in \Gamma(x,z)} \rho(\gamma) \leq \inf_{\gamma_1 \in \Gamma(x,y)} \rho(\gamma_1) + \inf_{\gamma_2 \in \Gamma(y,z)} \rho(\gamma_2) = \mdist{X} (x,y) + \mdist{X} (y,z)\,.
\end{equation}

(ii) Let $t=\ct_p(x) = \ct_p(x')$ so that $x$ and $x'$ fall in the same connected component of the sublevel set $\ct_p^{-1} ((-\infty,t])$. For any $\epsilon>0$, $x$ and $x'$ must fall in the same connected component $C_\epsilon$ of the open set $\ct_p^{-1} ((-\infty,t + \epsilon))$. Therefore, there is a path $\gamma \in \Gamma(x,y)$ whose image is contained in $C_\epsilon$. This implies that $\mdist{X}(x,x') \leq \rho (\gamma) < \epsilon$. Since $\epsilon>0$ is arbitrary, $\mdist{X}(x,x')=0$.

\smallskip

(iii) By (i) and (ii),
\begin{equation}
\mdist{X} (x,y) \leq \mdist{X} (x,x') + \mdist{X} (x',y') + \mdist{X} (y',y) = \mdist{X} (x',y').
\end{equation}
Similarly, $\mdist{X} (x',y') \leq \mdist{X} (x,y)$. This concludes the proof.
\end{proof}

Proposition \ref{P:mradius} implies that $\mdist{X}$ induces a pseudo metric on $\tree{X}$; that is, the function $\bardist{X} \colon \tree{X} \times \tree{X} \to \real$ given by
\begin{equation} \label{E:bardist}
\bardist{X} (\alpha_p(x),\alpha_p(y)) \coloneqq \mdist{X} (x,y)
\end{equation}
is a well-defined pseudo metric.

\begin{proposition} \label{P:distance1}
If $\mu \in \borel{X,d_X;p}$, $p \geq 1$, then $\pdist{X} = \bardist{X}$. In particular, $\dist{X}$ is a pseudo metric.
\end{proposition}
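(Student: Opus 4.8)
The plan is to reduce the claimed identity to a statement purely about the descended function $\qct_p$ and paths in $X$, and then verify it by two matching inequalities. Fix $x,y \in X$ and set $a = \alpha_p(x)$, $b = \alpha_p(y)$. Since the diagram \eqref{E:functions} commutes, $\qct_p(a) = \ct_p(x)$ and $\qct_p(b) = \ct_p(y)$, so the additive offsets appearing in the two definitions agree: $\qct_p(a)\wedge\qct_p(b) = \ct_p(x)\wedge\ct_p(y)$. Recalling that $\bardist{X}(a,b) = \mdist{X}(x,y) = \inf_{\gamma\in\Gamma(x,y)}\sup_{t\in I}\ct_p(\gamma(t)) - (\ct_p(x)\wedge\ct_p(y))$, it therefore suffices to prove the ``merge height'' identity
\[
\inf_{c\in\mpt(a,b)} \qct_p(c) \;=\; \inf_{\gamma\in\Gamma(x,y)}\,\sup_{t\in I}\ct_p(\gamma(t)),
\]
which I will establish by showing each side bounds the other.

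For the inequality $\leq$, fix any $\gamma\in\Gamma(x,y)$ and put $s = \sup_{t\in I}\ct_p(\gamma(t))$. Proposition \ref{P:lip} together with admissibility of $\fdist$ makes $\ct_p$ continuous on $(X,d_X)$, so $\ct_p\circ\gamma$ attains its maximum at some $t_0\in I$; set $x_0 = \gamma(t_0)$ and $c = \alpha_p(x_0)$, noting $\qct_p(c) = \ct_p(x_0) = s$. The whole trace of $\gamma$ lies in the sublevel set $A(s)$ and is connected, so it lies in the single component $C_c$ of $A(s)$ represented by $c$; in particular $x,y\in C_c$. Since $C_a$ is connected, contained in $A(\ct_p(x))\subseteq A(s)$, and contains $x\in C_c$, we get $C_a\subseteq C_c$, i.e.\ $a\preceq c$; likewise $b\preceq c$. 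Hence $c\in\mpt(a,b)$, so $\inf_{c'\in\mpt(a,b)}\qct_p(c') \leq \qct_p(c) = s$. Taking the infimum over $\gamma$ yields the $\leq$ inequality.

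For the reverse inequality $\geq$, fix $c\in\mpt(a,b)$ and set $s = \qct_p(c)$. From $a\preceq c$ and $b\preceq c$ we have $C_a, C_b\subseteq C_c$, so $x,y\in C_c\subseteq A(s)$. The difficulty is that $A(s)$ is closed, so membership in the same component of $A(s)$ does not by itself produce a connecting path. To remedy this, for $\epsilon>0$ consider the open sublevel set $U_\epsilon = \ct_p^{-1}((-\infty,s+\epsilon))$. As $X$ is locally path-connected, so is $U_\epsilon$, and its connected components are open and path-connected. Since $C_c$ is connected with $C_c\subseteq A(s)\subseteq U_\epsilon$, it lies in a single component $V$ of $U_\epsilon$, whence $x,y\in V$ and there is a path $\gamma\in\Gamma(x,y)$ with trace in $V\subseteq U_\epsilon$. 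For this path $\sup_{t\in I}\ct_p(\gamma(t))\leq s+\epsilon$, so the right-hand infimum is at most $s+\epsilon$; letting $\epsilon\to 0$ and then taking the infimum over $c$ gives the $\geq$ inequality.

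Combining the two inequalities proves the merge height identity and hence $\pdist{X} = \bardist{X}$; being equal to the pseudo metric $\bardist{X}$ of \eqref{E:bardist}, the function $\dist{X}$ is itself a pseudo metric. The crux of the argument is the reverse inequality, where the passage from the closed sublevel set $A(s)$ to an actual connecting path forces the enlargement to open sublevel sets and the use of the standing local path-connectedness hypothesis on $X$ (the fact highlighted at the start of Section \ref{S:prelim}); the forward inequality is comparatively routine, relying only on continuity of $\ct_p$ and compactness of $I$.
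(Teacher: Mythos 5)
Your proof is correct and takes essentially the same route as the paper's: one inequality is obtained by taking the point realizing the supremum of $\ct_p$ along a near-optimal path and showing its image is a merge point, and the reverse inequality by enlarging the closed sublevel set to an open one and invoking local path-connectedness to produce a connecting path. Your reduction to the ``merge height identity'' and the explicit verification that $c \in \mpt(a,b)$ via $C_a \subseteq C_c$ are just cleaner packaging of steps the paper treats tersely.
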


\begin{proof}
We first show that $\pdist{X} \leq \bardist{X}$. Let $a,b \in \tree{X}$ and $x_a, x_b \in X$ be such that $\alpha_p (x_a)=a$ and $\alpha_p (x_b)=b$. Given $\epsilon >0$, there is $\gamma \in \Gamma(x_a,x_b)$ such that $\sup_{t \in I} \ct_p (\gamma(t)) - (\ct_p (x) \wedge \ct_p (y)) < \bardist{X} (a,b) + \epsilon$. Let $t_0 \in I$ be a point that realizes this supremum and set $c = \alpha_p (\gamma(t_0))$. Clearly, $\ct_p (\gamma(t_0)) \geq \ct_p(x) \vee \ct_p (y)$ so that $c \in \mpt(a,b)$ and 
\begin{equation}
\pdist{X} (a,b) \leq \qct_p (c) - (\qct_p (a) \wedge \qct_p (b)) 
= \ct_p (\gamma(t_0)) - (\ct_p (x) \wedge \ct_p (y)) <  \bardist{X} (a,b) + \epsilon. 
\end{equation}
Since $\epsilon>0$ is arbitrary, we obtain the desired inequality.

For the opposite inequality, given $\epsilon >0$, let $c \in \mpt(a,b)$ be such that $\qct_p (c) - (\qct_p (a) \wedge \qct_p (b)) < \pdist{X} (a,b) + \epsilon$. Let $x_a, x_b, x_c \in X$ be representatives of the equivalence classes $a,b,c \in \tree{X}$, respectively, and $t_c = \qct_p (c)$. Then,
$x_a, x_b, x_c$ are contained in the same connected component $C$ of the open set $\ct_p^{-1} (-\infty, \pdist{X}(a,b) + \epsilon)$. Since $C$ is path connected, there is $\gamma \in \Gamma(x_a,x_b)$ whose image lies in $C$. This implies that $\bardist{X} (a,b) \leq \sup_{t \in I} \ct_p (\gamma(t)) - \ct_p (x_a) \wedge \ct_p (x_b) <  \pdist{X} (a,b) + \epsilon$. Since $\epsilon>0$ is arbitrary, we get $\bardist{X} (a,b) \leq \pdist{X} (a,b)$. 
\end{proof}

One can show that if $X$ is compact and $\tree{X}$ has finitely many leaves, then $\pdist{X}$ metrizes the quotient topology on $\tree{X}$ (cf.\,\cite{curry2022}). This, however, may not be true in general. Nonetheless, we show that $\pdist{X}$ has properties that make it well suited for our purposes: (a) the topology $\tau_p$ induced by $\pdist{X}$ is fine enough for the mapping $\qct_p \colon \tree{X}\to \real$ to be continuous (1-Lipschitz, as a matter of fact); (b) $\tau_p$ is coarse enough for $\alpha_p \colon X \to \tree{X}$ to be continuous; and (c) $\pdist{X}$ allows us to analyze BMT structural variation in a Gromov-Wasserstein type framework. As such, from this point on, we always assume that $\tree{X}$ is equipped with the metric $\pdist{X}$. 

\begin{proposition} \label{P:mcontinuous}
If $\mu \in \borel{X,d_X;p}$, then the following statements hold:
\begin{enumerate}[\rm(i)]
\item the map $\qct_p \colon (\tree{X}, \pdist{X}) \to \real$ is 1-Lipschitz;
\item the map $\alpha_p \colon (X,d_X) \to (\tree{X}, \pdist{X})$ is continuous.
\end{enumerate}
\end{proposition}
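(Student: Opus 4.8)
The plan is to handle the two claims separately: part (i) follows directly from the definition of $\pdist{X}$ together with the monotonicity of $\qct_p$, whereas part (ii) is the substantive one and rests on the local path-connectedness of $X$.

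For part (i), I would fix $a,b \in \tree{X}$ and assume without loss of generality that $\qct_p(a) \leq \qct_p(b)$, so that $\qct_p(a)\wedge\qct_p(b) = \qct_p(a)$. Every $c \in \mpt(a,b)$ satisfies $a \preceq c$ and $b \preceq c$, hence $\qct_p(c) \geq \qct_p(b)$ by the monotonicity recorded in Remark \ref{R:lowerset}(i). Thus $\qct_p(c) - (\qct_p(a)\wedge\qct_p(b)) \geq \qct_p(b) - \qct_p(a) = |\qct_p(a) - \qct_p(b)|$, and taking the infimum over $c$ in Definition \ref{D:treedist} gives $\pdist{X}(a,b) \geq |\qct_p(a)-\qct_p(b)|$, which is exactly the $1$-Lipschitz bound.

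For part (ii), since $\pdist{X} = \bardist{X}$ by Proposition \ref{P:distance1} and $\bardist{X}(\alpha_p(x),\alpha_p(y)) = \mdist{X}(x,y)$, continuity at $x$ amounts to showing that for each $\epsilon>0$ there is $\delta_0>0$ with $\mdist{X}(x,y) \leq \epsilon$ whenever $d_X(x,y)<\delta_0$. Two ingredients feed into this. First, admissibility upgrades Proposition \ref{P:lip} to a genuine Lipschitz estimate: fixing $L>0$ with $\fdist \leq L\,d_X$, one gets $|\ct_p(u)-\ct_p(v)| \leq \fdist(u,v) \leq L\,d_X(u,v)$. Second, to bound $\mdist{X}(x,y) = \inf_\gamma \rho(\gamma)$ I must actually produce a single path $\gamma$ from $x$ to $y$ along which $\ct_p$ stays close to $\ct_p(x)\wedge\ct_p(y)$, and local path-connectedness is precisely what lets me keep such a path inside a small $d_X$-ball.

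Concretely, given $\epsilon>0$ I would set $\delta = \epsilon/(2L)$ and look at the open ball $B_{d_X}(x,\delta)$. Since $X$ is locally path-connected, the path-component of $x$ in this ball is open, so it contains a ball $B_{d_X}(x,\delta_0)$ with $0<\delta_0\leq\delta$; every $y$ with $d_X(x,y)<\delta_0$ is then joined to $x$ by some $\gamma \in \Gamma(x,y)$ whose image lies in $B_{d_X}(x,\delta)$, so that $d_X(x,\gamma(t))<\delta$ for all $t$. Along $\gamma$ one has $\sup_t \ct_p(\gamma(t)) - \ct_p(x) \leq L\delta$ and $\ct_p(x) - \ct_p(x)\wedge\ct_p(y) \leq |\ct_p(x)-\ct_p(y)| < L\delta_0$, whence
\[
\rho(\gamma) = \big(\sup_t \ct_p(\gamma(t)) - \ct_p(x)\big) + \big(\ct_p(x) - \ct_p(x)\wedge\ct_p(y)\big) \leq L\delta + L\delta_0 \leq 2L\delta = \epsilon,
\]
and therefore $\mdist{X}(x,y) \leq \rho(\gamma) \leq \epsilon$. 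I expect the one genuine obstacle to be exactly this last ingredient: without local path-connectedness, two nearby points need not be joinable by a path confined to a small ball, and $\rho(\gamma)$ could then be forced to be large; once the confinement is secured, the rest is mere bookkeeping with the Lipschitz estimate.
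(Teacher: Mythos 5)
Your proof is correct and takes essentially the same route as the paper's: part (i) rests on the observation that any merge point $c$ satisfies $\qct_p(c) \geq \qct_p(a) \vee \qct_p(b)$ (you bound the infimand directly where the paper extracts a near-optimal $c$ with an $\epsilon$), and part (ii) combines $\pdist{X} = \bardist{X}$ from Proposition \ref{P:distance1}, the $L$-Lipschitz estimate $|\ct_p(u)-\ct_p(v)| \leq \fdist(u,v) \leq L\,d_X(u,v)$ from Proposition \ref{P:lip} with admissibility, and local path-connectedness to confine a path to a ball of radius $\epsilon/(2L)$, exactly as in the paper. The only cosmetic difference is that you verify $\epsilon$--$\delta$ continuity at a point while the paper checks that preimages of open sets are open.
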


\begin{proof}
(i) Given $a,b \in \tree{X}$ and $\epsilon>0$, Definition \ref{D:treedist} ensures that there exists $c \in \mpt(a,b)$ such that 
\begin{equation}
(\qct_p (c) - \qct_p (a)) \vee (\qct_p (c) - \qct_p(b)) < \pdist{X}(a,b) + \epsilon \,.
\end{equation}
Since $c$ is a merge point for $a$ and $b$, we also have that $\qct_p(a) \leq \qct_p (c)$ and  $\qct_p(b) \leq \qct_p (c)$. Therefore, $|\qct_p (a) - \qct_p(b)| < \pdist{X} (a,b) +\epsilon$. Since $\epsilon>0$ is arbitrary, $|\qct_p (a) - \qct_p(b)| \leq \pdist{X} (a,b)$.

(ii) Let $U \subseteq \tree{X}$ be an open set and $\widetilde{U}= \alpha_p^{-1} (U) \subseteq X$. To show that $\widetilde{U}$ is open, given $x \in \widetilde{U}$ we construct an open neighborhood $V$ of $x$ such that $\alpha_p (V) \subseteq U$. Let $a = \alpha_p (x)$ and pick $\epsilon>0$ such that $B(a,\epsilon) \subseteq U$, where $B(a,\epsilon)$ is the open ball of radius $\epsilon$ centered at $a$. Since $X$ is locally path connected, there exists a path connected open neighborhood $V$ of $x$ with the property that $V \subseteq B(x,\epsilon/2L)$, where $L>0$ is an admissibility constant for $\fdist$. We claim that $\alpha_p (V) \subseteq B(a,\epsilon) \subseteq U$. Indeed, given any $y \in V$, let $\beta \in \Gamma(x,y)$ be a path contained in $V$ and thus in $B(x,\epsilon/2L)$. Proposition \ref{P:lip} ensures that $\ct_p$ is $L$-Lipschitz so that 
\begin{equation}
\begin{split}
|\ct_p (\beta(s)) - \ct_p (\beta(t))| &\leq |\ct_p (\beta(s)) - \ct_p (x)|+ |\ct_p (\beta(t)) - \ct_p (x)| \\
&\leq L d_X (\beta(s),x) + L d_X (\beta(t),x) < \epsilon, 
\end{split}
\end{equation}
for any $s,t \in I$. In particular, $\rho(\beta) = \sup_{t \in I} \big(\ct_p (\beta(t)) -\ct_p (x)\big) \vee \big(\ct_p (\beta(t)) -\ct_p (y)\big) < \epsilon$. This implies that
\begin{equation}
\bardist{X} (a, \alpha_p (y)) = \bardist{X} (\alpha_p (x), \alpha_p (y)) = \inf_{\gamma \in } \rho(\gamma) \leq \rho(\beta) < \epsilon \,.
\end{equation}
By Proposition \ref{P:distance1}, $\pdist{X} (a,\alpha_p(y)) < \epsilon$, showing that $\alpha_p (y) \in B(a,\epsilon)$. This concludes the proof.
\end{proof}


\begin{definition}
Let $\mu \in \borel{X,d_X,p}$ and $\mm{X} = (X, d_X, \mu)$. The {\em metric-measure barycentric merge tree} of order $p$ of $\mm{X}$ is the triple $(\tree{X}, \pdist{X}, \mu_p)$, where $\mu_p = \alpha_{p\sharp} (\mu)$ is the pushforward of $\mu$ under $\alpha_p \colon X \to \tree{X}$.
\end{definition}

Henceforth, we always assume this $mm$-structure on $\tree{X}$ and simplify the terminology to \emph{barycentric merge tree (BMT) of $\mm{X}$} if the choice of $p$ (and the admissible pseudo metric $\fdist$) is clear from the context.  We close this section with additional discussion of the metric properties of the map $\alpha_p$ under the assumption that $(X,d_X)$ has finite connectivity modulus (see Definition \ref{D:modulus}).

\begin{proposition} \label{P:qlip}
If $K>0$ is a connectivity constant for $(X, d_X)$ and $\fdist$ is $L$-admissible, then the quotient map $\alpha_p \colon (X,d_X) \to (\tree{X}, \pdist{X})$ is $KL$-Lipschitz.
\end{proposition}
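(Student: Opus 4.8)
The plan is to reduce the Lipschitz estimate to a comparison between the merge radius function $\mdist{X}$ and the merge distance function $r_\fdist$, and then invoke Proposition \ref{P:kmodulus}. First I would unravel the definitions: by Proposition \ref{P:distance1} we have $\pdist{X} = \bardist{X}$, and by the defining relation \eqref{E:bardist}, $\bardist{X}(\alpha_p(x),\alpha_p(y)) = \mdist{X}(x,y)$. Hence the statement is equivalent to the inequality $\mdist{X}(x,y) \leq KL\, d_X(x,y)$ for all $x,y \in X$, and it suffices to work entirely at the level of the merge radius function on $X$.

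The core of the argument is a pointwise bound along paths. Fix $x,y \in X$ and a path $\gamma \in \Gamma(x,y)$. For each $t \in I$, Proposition \ref{P:lip} (the fact that $\ct_p$ is $1$-Lipschitz with respect to $\fdist$, together with the symmetry of $\fdist$) gives $\ct_p(\gamma(t)) - \ct_p(x) \leq \fdist(x,\gamma(t))$ and $\ct_p(\gamma(t)) - \ct_p(y) \leq \fdist(\gamma(t),y)$. A short case analysis on whether $\ct_p(x) \wedge \ct_p(y)$ equals $\ct_p(x)$ or $\ct_p(y)$ then yields
\[
\ct_p(\gamma(t)) - \big(\ct_p(x) \wedge \ct_p(y)\big) \leq \fdist(x,\gamma(t)) \vee \fdist(\gamma(t),y),
\]
so taking the supremum over $t \in I$ produces $\rho(\gamma) \leq \sup_{t\in I} \fdist(x,\gamma(t)) \vee \fdist(\gamma(t),y)$.

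Taking the infimum over all $\gamma \in \Gamma(x,y)$ and recalling the definition \eqref{E:mergec} of $r_\fdist$ gives $\mdist{X}(x,y) \leq r_\fdist(x,y)$. Finally, Proposition \ref{P:kmodulus} bounds $r_\fdist(x,y) \leq KL\, d_X(x,y)$, which chains together with the reductions above to deliver $\pdist{X}(\alpha_p(x),\alpha_p(y)) \leq KL\, d_X(x,y)$, as required.

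I do not expect a serious obstacle here; the only step requiring care is the pointwise comparison, where one must keep track of which of $\ct_p(x),\ct_p(y)$ is smaller so as to match the maximum $\fdist(x,\gamma(t)) \vee \fdist(\gamma(t),y)$ appearing inside $r_\fdist$. The essential conceptual point is that the $1$-Lipschitz property of $\ct_p$ with respect to $\fdist$ lets the variation of $\ct_p$ along a path be controlled by the $\fdist$-excursion of that path, which is exactly the quantity the connectivity modulus machinery converts into a $d_X$-estimate.
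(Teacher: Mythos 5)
Your proposal is correct and follows essentially the same route as the paper's proof: both rest on the path characterization $\pdist{X}(\alpha_p(x),\alpha_p(y)) = \inf_{\gamma}\rho(\gamma)$, use Proposition \ref{P:lip} to bound the variation of $\ct_p$ along a path by its $\fdist$-excursion, and invoke Proposition \ref{P:kmodulus} to convert that excursion into a $KL\,d_X$-estimate. The only difference is cosmetic: you isolate the clean intermediate inequality $\mdist{X} \leq r_\fdist$ and compose it with Proposition \ref{P:kmodulus}, whereas the paper unwinds the same infimum through an $\epsilon$-optimal path.
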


\begin{proof}
Given $x,y \in X$ and $\epsilon>0$, by Proposition \ref{P:kmodulus} there is a path $\gamma_\epsilon \in \Gamma(x,y)$ such that 
\begin{equation} \label{E:thetabound}
\sup_{t \in I} \fdist (\gamma_\epsilon(t),x) \vee \fdist (\gamma_\epsilon(t),y) < \epsilon + r_\fdist (x,y)
\leq \epsilon + KL d_X (x,y),
\end{equation}
with $r_\fdist (x,y)$ as in \eqref{E:mergec}. Proposition \ref{P:lip} and \eqref{E:thetabound} imply that $\rho(\gamma_\epsilon) < \epsilon + KL d_X(x,y)$. Therefore,
\begin{equation}
\pdist{X} (\alpha_p (x), \alpha_p(y)) =
\inf_{\gamma \in \Gamma(x,y)} \rho(\gamma) \leq \rho(\gamma_\epsilon)  < \epsilon + KL d_X(x,y).
\end{equation}
Since $\epsilon>0$ is arbitrary, $\pdist{X} (\alpha_p (x), \alpha_p(y)) \leq KL d_X (x,y)$, as claimed.
\end{proof}

\section{Stability and Consistency for Barycentric Merge Trees} \label{S:stabcon}

The main goal of this section is to establish the stability of barycentric merge trees associated with probability measures  $\mu \in \borel{X,d_X;p}$, where the BMTs are constructed with respect to a fixed admissible pseudo metric $\fdist$. Since $\tree{X}$ is also equipped with a function $\qct_p \colon \tree{X} \to \real$ induced by the deviation function $\ct_p$ (see \eqref{E:functions}), we address stability of the full functional barycentric merge tree $\ftree{X}\coloneqq (\tree{X}, d_{p,\mm{X}}, \mu_p, \qct_p)$. In our stability and consistency results, variation in $\mu$ is quantified with the Wasserstein distance in $(X,d_X)$, from classical optimal transport theory~\cite{villani2008optimal}, whereas variation in $\ftree{X}$ is measured with a functional version of Sturm's $L_p$-transportation distance, so we begin with a discussion of this distance.


\subsection{Distances Between Functional Metric Measure Spaces}

In \cite{sturm2006}, Sturm introduced $L_p$-transportation distances, $p \geq 1$, between $mm$-spaces building on Kantorovich's formulation of optimal transport distances that are frequently referred to in the literature as the \emph{Wasserstein $p$-distances} $w_p$. Later M\'{e}moli introduced a variant, termed \emph{Gromov-Wasserstein $p$-distance}, based on expected distortions of probabilistic couplings \cite{memoli2007}. As in some of the subsequent literature Sturm's version has also been called Gromov-Wasserstein, to avoid confusion, we refer to Sturm's formulation as the Kantorovich-Sturm $p$-distance $\dks$ and denote M\'{e}moli's version $\dgw$.  The distance $\dgw$ has the virtue of being more amenable to computation and is a lower bound for $\dks$; that is, $\dgw(\mm{Z},\mm{Z}') \leq \dks(\mm{Z},\mm{Z}')$ \cite[Theorem 5.1]{memoli2011}. Because of this inequality, in studying the stability of barycentric merge trees, we use $\dks$-type metrics as they lead to stronger results that imply stability with respect to $\dgw$-type distances. 

The Kantorovich-Sturm distance can be extended to pseudo $mm$-spaces and described as follows. Let $\mm{Z} = (Z,d,\mu)$ and $\mm{Z'} = (Z',d',\mu')$ be (pseudo) $mm$-spaces, where $\mu$ and $\mu'$ have finite $p$-moments. Let $A \subseteq Z$ and $A' \subseteq Z'$ be the supports of $\mu$ and $\mu'$, respectively. A {\em metric coupling} between $\mm{Z}$ and $\mm{Z}'$ is a (pseudo) metric $\delta \colon (Z \sqcup Z') \times (Z \sqcup Z') \to \real$ on the disjoint union $Z \sqcup Z'$ with the property that $\delta|_{A\times A} = d|_{A\times A}$ and  $\delta|_{A' \times A'} = d'|_{A'\times A'}$. The set of all such metric couplings is denoted $M(\mm{Z},\mm{Z}')$. A {\em probabilistic coupling} between $\mu$ and $\mu'$ is a Borel probability measure $h$ on $Z \times Z'$ that marginalizes to $\mu$ and $\mu'$; that is, $\pi_\sharp (h) = \mu$ and $\pi'_\sharp (h) = \mu'$, where $\pi$ and $\pi'$ denote the projections to the first and second components, respectively. The set of all such probabilistic couplings is denoted $C(\mu,\mu')$.

\begin{definition}[\cite{sturm2006}] \label{D:gw}
The (extended) {\em Kantorovich-Sturm $p$-distance}, $p \geq 1$, is defined as
\[
\dks (\mm{Z},\mm{Z}') \coloneqq \inf_{\substack{h \in C(\mu,\mu')\\ \delta \in M(\mm{Z},\mm{Z}')}}
\Big( \int_{Z \times Z'} \delta^p (z,z') \,dh (z,z') \Big)^{1/p}.
\]
\end{definition}

Since a merge tree $\tree{X}$ is also equipped with a function $\qct_p \colon \tree{X} \to \real$, we adopt a functional analogue of $\dks$. For $f \colon Z \to \real$ and $f' \colon Z' \to \real$, we define a Kantorovich-Sturm distance between the functional $mm$-spaces $\mm{F} = (Z,d,\mu,f)$ and $\mm{F}' = (Z',d',\mu',f')$.

\begin{definition} \label{D:fgw}
The (extended) {\em functional Kantorovich-Sturm $p$-distance}, $p \geq 1$, is defined as
\[
\fks (\mm{F},\mm{F}') \coloneqq \inf_{\substack{h \in C(\mu,\mu')\\ \delta \in M(\mm{Z},\mm{Z}')}} \Big(\int_{Z \times Z'} \delta^p (z,z') \,dh (z,z') \Big)^{1/p} \vee \Big( \int_{Z \times Z'} |f(z)-f'(z')|^p \,dh (z,z') \Big)^{1/p}.
\]
\end{definition}
We refer to the first term in the above maximum as the {\em structural offset} of the pair $(\delta,h)$ and to the second term as the {\em functional offset} of the coupling $h$. Clearly, by their definitions,
$\dks(\mm{Z},\mm{Z}') \leq \fks(\mm{F},\mm{F}')$. The distance $\fks$ is a variant of the {\em Fused Gromov-Wasserstein distance} of \cite{vayer2020fused}, which is a functional version of $\dkm$.

Metric couplings are closely tied to relations and correspondences between $Z$ and $Z'$ (a correspondences is a relation $R \subseteq Z \times Z'$ for which the projections $\pi \colon R \to Z$ and $\pi' \colon R \to Z'$ are surjective). Here, we review a basic connection needed in the paper. Given a relation $\emptyset \ne R \subseteq Z \times Z'$, the {\em distortion} of $R$ is defined as
\begin{equation} \label{E:distortion}
dis(R) \coloneqq \sup_{(x,x'), (y,y') \in R} |d(x,y) - d'(x',y')|\,.
\end{equation}
For $r>0$, define $\delta_r \colon (Z \sqcup Z') \times (Z \sqcup Z')  \to \real$ by $\delta_r|_{Z \times Z} = d$, $\delta_r|_{Z' \times Z'} = d'$, and
\begin{equation} \label{E:deltar}
\delta_r (z,z') = \delta_r (z',z) \coloneqq r + \inf_{(w,w') \in R} d(z,w) + d'(w',z'),
\end{equation}
for any $z \in Z$ and $z' \in Z'$.

\begin{proposition}[\cite{burago2001}, see also \cite{anbouhi2024}] \label{P:r2c}
If $\emptyset \ne R \subseteq Z \times Z'$ and $dis(R) \leq 2r$, then $\delta_r$ is a pseudo metric on $Z \sqcup Z'$.
\end{proposition}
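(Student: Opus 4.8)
The plan is to verify directly that $\delta_r$ satisfies the three defining properties of a pseudo metric: vanishing on the diagonal, symmetry, and the triangle inequality. The first two are immediate from the construction. For any $p \in Z \sqcup Z'$ we have $\delta_r(p,p) = d(p,p) = 0$ or $\delta_r(p,p) = d'(p,p) = 0$ according to whether $p \in Z$ or $p \in Z'$, since $d$ and $d'$ are pseudo metrics. Symmetry holds because $\delta_r$ restricts to the symmetric functions $d$ and $d'$ on $Z \times Z$ and $Z' \times Z'$, while the cross term in \eqref{E:deltar} is symmetric by definition. Non-negativity is clear as well, since $d, d' \geq 0$ and the cross term is bounded below by $r > 0$; note also that $R \neq \emptyset$ guarantees the infimum in \eqref{E:deltar} is taken over a nonempty index set, so $\delta_r$ is well defined.

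The substance of the proof is the triangle inequality $\delta_r(p,q) \leq \delta_r(p,s) + \delta_r(s,q)$, which I would establish by cases according to the location of the three points $p,s,q$ in $Z$ or $Z'$. Using symmetry of $\delta_r$ and the symmetric roles of $Z$ and $Z'$, it suffices to treat the cases up to swapping the two pieces. When $p,s,q$ all lie in $Z$ (resp.\ all in $Z'$) the inequality is just the triangle inequality for $d$ (resp.\ $d'$). The cases in which $s$ lies in the same space as one of the endpoints are routine: one inserts the triangle inequality of $d$ or $d'$ inside the infimum defining the relevant cross term and pulls the resulting constant out. For instance, if $p = z_1, s = z_2 \in Z$ and $q = z' \in Z'$, then $d(z_1,w) \leq d(z_1,z_2) + d(z_2,w)$ for every $(w,w') \in R$, and taking the infimum yields $\delta_r(z_1,z') \leq d(z_1,z_2) + \delta_r(z_2,z')$.

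The essential case---and the only place the hypothesis $dis(R) \leq 2r$ is used---is when the two endpoints lie in one space and the intermediate point lies in the other, say $p = z_1, q = z_2 \in Z$ and $s = z' \in Z'$. Here I would chain across the relation: for arbitrary $(w,w'), (u,u') \in R$, the triangle inequality in $d$ gives $d(z_1,z_2) \leq d(z_1,w) + d(w,u) + d(u,z_2)$, while the distortion bound yields $d(w,u) \leq d'(w',u') + dis(R) \leq d'(w',u') + 2r$. Routing through $z'$ via $d'(w',u') \leq d'(w',z') + d'(z',u')$ and regrouping gives
\[
d(z_1,z_2) \leq 2r + \big[d(z_1,w) + d'(w',z')\big] + \big[d(z_2,u) + d'(u',z')\big].
\]
Taking the infimum over $(w,w')$ and $(u,u')$ independently produces exactly $\delta_r(z_1,z') + \delta_r(z',z_2)$, as desired; the symmetric situation with endpoints in $Z'$ and intermediate point in $Z$ is identical after exchanging the roles of $d$ and $d'$.

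I expect this cross-chaining step to be the main obstacle, since it is the one place where the two separate metrics must be reconciled. The factor $2r$ in the distortion hypothesis is precisely what is needed: passing through $R$ twice (once near each endpoint) generates two cross-term constants, and the bound $dis(R) \leq 2r$ supplies exactly enough slack to absorb the single metric swap $d(w,u) \mapsto d'(w',u')$ into the $2r$ already present in $\delta_r(z_1,z') + \delta_r(z',z_2)$. All remaining cases are either trivial reductions to $d$ or $d'$ or follow from the argument just sketched, so assembling them completes the verification that $\delta_r$ is a pseudo metric.
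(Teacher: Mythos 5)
Your proof is correct and complete: the only substantive point is the triangle inequality in the mixed case (both endpoints in one space, intermediate point in the other), and your cross-chaining estimate $d(z_1,z_2) \leq d(z_1,w) + d(w,u) + d(u,z_2)$ combined with $d(w,u) \leq d'(w',u') + 2r$ and routing through $z'$ is exactly the standard argument. The paper itself states this proposition without proof, citing Burago--Burago--Ivanov, and your verification coincides with the argument given in that reference, so there is nothing to flag.
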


Clearly, $\delta_r (z,z') \geq r >0$, for any $z \in Z$ and $z' \in Z'$. Thus, $\delta_r$ is a metric if and only if both $d$ and $d'$ are metrics.


\subsection{Stability and Consistency}

For $\mu, \mu' \in \borel{X,d_X;p}$, we adopt the abbreviations $\mm{F}_p = (T_p, d_p, \mu_p, \qct_p)$ and $\mm{F}_p'=(T'_p, d'_p, \mu'_p, \qct'_p)$ for the functional barycentric merge trees of $\mu$ and $\mu'$, respectively, constructed with respect to a fixed admissible $\fdist \colon X \times X \to \real$. Similarly, we let $\mm{T}_p = (T_p, d_p, \mu_p)$ and $\mm{T}_p'=(T'_p, d'_p, \mu'_p)$ be the structural parts of $\mm{F}_p$ and $\mm{F}'_p$, and $\ct_p$ and $\ct_p'$ the $p$-deviation functions of $\mu$ and $\mu'$. To relate $\fks(\mm{F}_p, \mm{F}_p')$ with $w_p (\mu,\mu')$, we first construct a metric coupling between $\mm{T}_p$ and $\mm{T}'_p$ starting with the correspondence $R \subseteq T_p \times T'_p$ given by
\begin{equation}
R\coloneqq \big\{ (\alpha_p (x), \alpha'_p (x)) \colon x \in X\big\}.
\end{equation}
Given $r>0$, let $\delta_r \colon (T_p \sqcup T'_p) \times (T_p \sqcup T'_p) \to \real$ be defined as described in \eqref{E:deltar}: namely, (i) $\delta_r|_{T_p \times T_p} = d_p$; (ii) $\delta_r|_{T'_p \times T'_p} = d'_p$; and (iii) for $a \in T_p$ and $b \in T'_p$,
\begin{equation} \label{E:coupling}
\delta_r (a,b) = \delta_r (b,a)\coloneqq r + \inf_{x \in X} \big(d_p (a,\alpha_p (x)) + d'_p (\alpha'_p (x), b)\big).
\end{equation}
Note that if $x_1, x_2 \in X$, then \eqref{E:coupling} implies that
\begin{equation} \label{E:couplingbound}
\delta_r (\alpha_p (x_1),\alpha'_p (x_2)) \leq r + d_p (\alpha(x_1),\alpha_p (x_2)) ,
\end{equation}
an inequality that is used below in the proof of the stability of BMTs.

\begin{lemma}[The Coupling Lemma] \label{L:coupling}
If the $p$-deviation functions satisfy $|\ct_p (x) - \ct'_p (x)| \leq r$, $\forall x \in X$, then $\delta_r$ defines a pseudo metric on $T_p \sqcup T'_p$. In particular, $\delta_r$ yields a metric coupling between $\mm{T}_p$ and $\mm{T}'_p$.
\end{lemma}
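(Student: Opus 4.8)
The plan is to reduce everything to a distortion estimate and then invoke Proposition~\ref{P:r2c}. Since $R \neq \emptyset$ and $\delta_r$ is built from $R$ exactly as in \eqref{E:deltar}, it suffices to show that the hypothesis $|\ct_p(x) - \ct_p'(x)| \leq r$ forces $dis(R) \leq 2r$. Because every element of $R$ has the form $(\alpha_p(x), \alpha'_p(x))$, by \eqref{E:distortion} I must bound
\[
\big| d_p(\alpha_p(x), \alpha_p(y)) - d'_p(\alpha'_p(x), \alpha'_p(y)) \big|
\]
uniformly over $x,y \in X$. By Proposition~\ref{P:distance1} together with \eqref{E:bardist}, these two quantities are precisely the merge radius functions: $d_p(\alpha_p(x), \alpha_p(y)) = \mdist{X}(x,y) = \inf_{\gamma \in \Gamma(x,y)} \rho(\gamma)$, and likewise $d'_p(\alpha'_p(x), \alpha'_p(y)) = \inf_{\gamma \in \Gamma(x,y)} \rho'(\gamma)$, where $\rho$ and $\rho'$ are the path functionals of \eqref{D:predistance} formed with $\ct_p$ and $\ct_p'$ respectively. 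The decisive structural point is that both infima range over the \emph{same} set of paths $\Gamma(x,y)$, since the two BMTs live over a common base space $X$ and differ only through their deviation functions.

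Next I would compare $\rho$ and $\rho'$ along a single fixed path $\gamma \in \Gamma(x,y)$. Applying the pointwise hypothesis termwise gives $\sup_{t} \ct_p(\gamma(t)) \leq \sup_{t} \ct_p'(\gamma(t)) + r$ and $\ct_p(x) \wedge \ct_p(y) \geq (\ct_p'(x) \wedge \ct_p'(y)) - r$; subtracting the second from the first yields $\rho(\gamma) \leq \rho'(\gamma) + 2r$. By the symmetric argument $\rho'(\gamma) \leq \rho(\gamma) + 2r$, so $|\rho(\gamma) - \rho'(\gamma)| \leq 2r$ for every $\gamma \in \Gamma(x,y)$.

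I then pass to infima. For any $\epsilon > 0$, pick $\gamma$ with $\rho(\gamma) < \mdist{X}(x,y) + \epsilon$; then $\inf_{\eta} \rho'(\eta) \leq \rho'(\gamma) \leq \rho(\gamma) + 2r < \mdist{X}(x,y) + \epsilon + 2r$, and letting $\epsilon \to 0$ gives one inequality, with the reverse following by symmetry. Hence the two merge radii differ by at most $2r$, i.e.\ $dis(R) \leq 2r$, and Proposition~\ref{P:r2c} guarantees that $\delta_r$ is a pseudo metric on $T_p \sqcup T'_p$. The metric-coupling assertion is then immediate: by the construction in \eqref{E:coupling}, $\delta_r$ restricts to $d_p$ on $T_p \times T_p$ and to $d'_p$ on $T'_p \times T'_p$, so the compatibility conditions defining an element of $M(\mm{T}_p, \mm{T}'_p)$ hold.

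There is no deep obstacle here; the argument is essentially bookkeeping. The one point that must be handled carefully is the path-by-path comparison of $\rho$ and $\rho'$, which works only because the same path space $\Gamma(x,y)$ underlies both merge radii. Once the two-sided bound $|\rho(\gamma) - \rho'(\gamma)| \leq 2r$ is established, the infimum argument and the appeal to Proposition~\ref{P:r2c} are routine.
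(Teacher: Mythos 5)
Your proposal is correct and follows essentially the same route as the paper's proof: reduce to the distortion bound $dis(R) \leq 2r$ via Proposition~\ref{P:r2c}, establish the path-by-path comparison $|\rho(\gamma) - \rho'(\gamma)| \leq 2r$, and then pass to infima over $\Gamma(x,y)$. The only differences are cosmetic---you bound $\sup_t \ct_p(\gamma(t))$ and $\ct_p(x) \wedge \ct_p(y)$ termwise (avoiding the paper's choice of a maximizing $t_0$) and use an explicit $\epsilon$-argument where the paper uses a sup--inf manipulation.
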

\begin{proof}
By Proposition \ref{P:r2c}, it suffices to check that $dis (R) \leq 2r$; that is, for any $x,y \in X$,
\begin{equation} \label{E:dist}
|d_p (\alpha_p (x), \alpha_p (y)) - d'_p (\alpha'_p(x), \alpha'_p (y))| \leq 2r \,.
\end{equation}
We first show that for any curve $\gamma \in \Gamma(x,y)$,  $|\rho(\gamma) - \rho'(\gamma)| \leq 2r$, where $\rho (\gamma)$ is as in \eqref{D:predistance} and $\rho'(\gamma)$ is its counterpart for $\ct'_p$. Let $t_0 \in I$ be such that
\begin{equation}
\rho(\gamma) = \big(\ct_p (\gamma(t_0)) - \ct_p(x)\big) \vee \big(\ct_p (\gamma(t_0)) - \ct_p (y)\big).
\end{equation}
Then, by the hypothesis on the deviation functions and the triangle inequality, we have that
\begin{equation} \label{E:curve2}
\begin{split}
\ct_p(\gamma(t_0)) - \ct_p(x) &= \ct_p(\gamma(t_0)) - \ct'_p(\gamma(t_0)) + \ct'_p(\gamma(t_0)) - \ct'_p (x) + \ct'_p (x) - \ct_p(x) \\
&\leq 2r + \ct'_p(\gamma(t_0)) - \ct'_p (x) \leq 2r + \sup_{t\in I} \ct'_p(\gamma(t)) - \ct'_p (x) .
\end{split}
\end{equation}
Similarly, 
\begin{equation} \label{E:curve3}
\ct_p (\gamma(t_0)) - \ct_p (y) \leq 2r + \sup_{t\in I} \ct'_p(\gamma(t)) - \ct'_p (y)  \,.
\end{equation}
Taking the maximum of \eqref{E:curve2} and \eqref{E:curve3}, we obtain
\begin{equation}
\rho (\gamma) = \big(\ct_p (\gamma(t_0)) - \ct_p(x)\big) \vee \big(\ct_p (\gamma(t_0)) - \ct_p (y)\big) \leq 2r + \rho' (\gamma) \,.
\end{equation}
Symmetrically, we have that $\rho'(\gamma) \leq 2r + \rho (\gamma)$. Therefore, $|\rho(\gamma) - \rho'(\gamma)| \leq 2r$, as claimed. We now estimate the difference $d'_p (\alpha'_p(x), \alpha'_p (y)) - d_p (\alpha_p(x), \alpha_p (y))$. For a fixed curve $\gamma \in $, we have
\begin{equation} \label{E:curve5}
\inf_{\gamma' \in } \rho' (\gamma') - \rho(\gamma) \leq \rho'(\gamma) - \rho(\gamma) \,.
\end{equation}
Therefore,
\begin{equation}
\begin{split}
d'_p (\alpha'_p(x), \alpha'_p (y)) &- d_p (\alpha_p(x), \alpha_p (y)) = 
\inf_{\gamma' \in \Gamma(x,y)} \rho'(\gamma') - \inf_{\gamma \in \Gamma (x,y)} \rho(\gamma) \\
&= \sup_{\gamma \in \Gamma(x,y)} \big( \inf_{\gamma' \in } \rho'(\gamma') - \rho(\gamma)\big)
\leq \sup_{\gamma \in \Gamma(x,y)} \big( \rho'(\gamma) - \rho (\gamma) \big) \leq 2r.
\end{split}
\end{equation}
Similarly,
$d_p (\alpha_p(x), \alpha_p (y)) - d'_p (\alpha'_p(x), \alpha'_p (y)) \leq 2r$.
This shows that \eqref{E:dist} holds and concludes the proof.
\end{proof}
\begin{theorem}[Stability of BMTs] \label{T:stab}
Let $(X, d_X)$ be a connected and locally path-connected Polish metric space and $\fdist \colon X \times X \to \real$ an $L$-admissible pseudo metric, $L>0$. If $\mu, \mu' \in \borel{X,d_X;p}$ and $K\geq 1$ is a connectivity constant for $(X,d_X)$, then
\[
\fks(\mm{F}_p, \mm{F}'_p) \leq L(1+K) w_p (\mu,\mu').
\]
In particular, $\fks (\mm{F}_p, \mm{F}'_p) \leq 2 w_p (\mu,\mu')$ if $(X,d_X)$ is a geodesic space and $\theta=d_X$.
\end{theorem}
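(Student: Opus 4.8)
The plan is to exhibit, for a suitable $r>0$, a single pair $(\delta_r, h)$ consisting of a metric coupling $\delta_r \in M(\mm{T}_p, \mm{T}'_p)$ and a probabilistic coupling $h \in C(\mu_p, \mu'_p)$, and to bound \emph{both} the structural and the functional offset of this pair by $L(1+K)\,w_p(\mu,\mu')$. Since $\fks$ is an infimum over all such pairs, this yields the theorem. The probabilistic coupling is obtained by transport: fixing a (near-)optimal $\pi \in C(\mu,\mu')$ realizing $w_p(\mu,\mu')$ in $(X,d_X)$, I set $h = (\alpha_p \times \alpha'_p)_\sharp \pi$; since $\alpha_{p\sharp}\mu = \mu_p$ and $\alpha'_{p\sharp}\mu' = \mu'_p$, this $h$ marginalizes correctly. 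The metric coupling is the $\delta_r$ built from the correspondence $R = \{(\alpha_p(x), \alpha'_p(x)) : x \in X\}$ exactly as in \eqref{E:coupling}.

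The first substantive step, and the one I expect to be the crux, is verifying the hypothesis of the Coupling Lemma, namely the \emph{uniform} estimate $\sup_{x \in X} |\ct_p(x) - \ct'_p(x)| \leq L\,w_p(\mu,\mu')$, which then licenses the choice $r = L\,w_p(\mu,\mu')$. I would prove this by a coupling/Minkowski argument in the spirit of Proposition \ref{P:lip}: for any $\pi \in C(\mu,\mu')$, express $\ct_p(x)$ and $\ct'_p(x)$ as $L^p(\pi)$-norms of $\fdist(x,\cdot)$ against the two marginals, apply the reverse triangle inequality for $L^p$ norms to obtain $|\ct_p(x) - \ct'_p(x)| \leq \big(\int |\fdist(x,y) - \fdist(x,y')|^p \, d\pi\big)^{1/p}$, then use $|\fdist(x,y) - \fdist(x,y')| \leq \fdist(y,y') \leq L\,d_X(y,y')$ and take the infimum over $\pi$. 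With this bound in hand, Lemma \ref{L:coupling} guarantees that $\delta_r$ with $r = L\,w_p(\mu,\mu')$ is a genuine metric coupling.

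It then remains to estimate the two offsets against $h$. For the structural offset I push the integral down to $X \times X$ via $h$, apply \eqref{E:couplingbound} together with the $KL$-Lipschitz bound for $\alpha_p$ from Proposition \ref{P:qlip} to get $\delta_r(\alpha_p(x), \alpha'_p(y)) \leq r + KL\,d_X(x,y)$, and conclude by Minkowski's inequality in $L^p(\pi)$ that the structural offset is at most $r + KL\,w_p(\mu,\mu') = L(1+K)\,w_p(\mu,\mu')$. For the functional offset, the commuting diagram \eqref{E:functions} identifies $\qct_p(\alpha_p(x)) = \ct_p(x)$ and $\qct'_p(\alpha'_p(y)) = \ct'_p(y)$, so the integrand is $|\ct_p(x) - \ct'_p(y)|$; splitting this as $|\ct_p(x) - \ct'_p(x)| + |\ct'_p(x) - \ct'_p(y)|$ and using the uniform bound on the first term (integrated against the first marginal $\mu$) and Proposition \ref{P:lip} (giving $|\ct'_p(x) - \ct'_p(y)| \leq L\,d_X(x,y)$) on the second, Minkowski's inequality bounds the functional offset by $r + L\,w_p(\mu,\mu') = 2L\,w_p(\mu,\mu')$.

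Finally, since $K \geq 1$ we have $2L \leq L(1+K)$, so the maximum of the two offsets is at most $L(1+K)\,w_p(\mu,\mu')$; taking the infimum defining $\fks$ (and letting the near-optimality parameter tend to $0$ should an exact optimal $\pi$ be unavailable) gives the stated bound. The special case is then immediate: a geodesic space has connectivity modulus $K_X = 1$ by Proposition \ref{P:geodesic}, and $\fdist = d_X$ is $1$-admissible, so $L(1+K) = 2$.
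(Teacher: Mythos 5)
Your proposal is correct and follows essentially the same route as the paper's proof: the uniform bound $\sup_x|\ct_p(x)-\ct'_p(x)| \leq L\,w_p(\mu,\mu')$ via Minkowski and couplings, the Coupling Lemma with $r = L\,w_p(\mu,\mu')$ applied to the correspondence $R = \{(\alpha_p(x),\alpha'_p(x))\}$, the pushforward coupling $(\alpha_p\times\alpha'_p)_\sharp$, and the identical structural/functional offset estimates (with the same splitting $\ct_p(x)-\ct'_p(x)+\ct'_p(x)-\ct'_p(y)$ for the functional term). The only cosmetic difference is that you fix a near-optimal transport plan up front rather than bounding over arbitrary couplings and taking the infimum at the end, which changes nothing of substance.
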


\begin{proof}
We begin the proof by showing that
\begin{equation} \label{E:central}
|\ct_p(x) - \ct_p'(x)| \leq L w_p (\mu,\mu') \,,
\end{equation}
for all $x \in X$, using a standard argument that employs the Minkowski inequality and the marginal conditions of a measure coupling. Indeed, for any $h \in C(\mu,\mu')$, we have that
\begin{equation}
\begin{split}
|\ct_p(x) - \ct_p'(x)| &= \Big| \Big( \int_X \fdist^p (x,y) \,d\mu (y) \Big)^{1/p} - \Big( \int_X \fdist^p (x,y') \,d\mu' (y') \Big)^{1/p} \Big| \\
&= \Big| \Big( \int_{X \times X} \fdist^p (x,y) \,dh (y,y') \Big)^{1/p} - \Big( \int_{X \times X}  \fdist^p (x,y') \,dh (y,y') \Big)^{1/p} \Big| \\
&\leq \Big(\int_{X \times X} |\fdist (x,y) - \fdist (x,y')|^p \,dh (y,y') \Big)^{1/p} \leq \Big(\int_{X \times X} \fdist^p (y,y') \,dh (y,y') \Big)^{1/p}  \\
&\leq L \Big(\int_{X \times X} d^p_X (y,y') \,dh (y,y') \Big)^{1/p} ,
\end{split}
\end{equation}
Since the coupling $h$ is arbitrary, we obtain
\begin{equation} \label{E:fdist}
|\ct_p(x) - \ct_p'(x)| \leq L \inf_{h \in C(\mu,\mu')} \Big(\int_{X \times X} d^p (y,y') \,dh (y,y') \Big)^{1/p} = L w_p (\mu,\mu'),
\end{equation}
as claimed. Therefore, the Coupling Lemma applied with $r = Lw_p (\mu,\mu')$ ensures that $\delta_r$, defined in \eqref{E:coupling}, gives a coupling between $d_p$ and $d'_p$. Also note that any $h \in C(\mu,\mu')$ induces a coupling $\bar{h} \coloneqq {(\alpha_p \times \alpha_p')}_\sharp (h) \in  C(\mu_p,\mu'_p)$, where $\mu_p = {\alpha_p}_\sharp (\mu)$ and $\mu'_p = {\alpha'_p}_\sharp(\mu')$. Then, from inequality \eqref{E:couplingbound} and Proposition \ref{P:qlip}, we obtain
\begin{equation} \label{E:estimate0}
\delta_r (\alpha_p (x), \alpha_p' (y))
\leq r + d_p (\alpha_p (x), \alpha_p (y) ) \leq L w_p(\mu,\mu') + KL d_X (x,y).
\end{equation}
Therefore, using the Minkowski inequality, we get the following estimate for the structural offset of the pair $(\delta_r,\bar{h})$:
\begin{equation} \label{E:estimate1}
\begin{split}
\Big( \int_{T_p \times T'_p} \delta_r^p (\bar{x}, \bar{y}) \, d\bar{h} (\bar{x}, \bar{y}) \Big)^{1/p} &= \Big( \int_{X \times X} \delta_r^p (\alpha_p(x), \alpha'_p(y)) \, dh (x,y) \Big)^{1/p} \\
&\leq L w_p(\mu,\mu') + KL  \Big( \int_{X \times X} d_X^p (x,y) \, dh (x,y) \Big)^{1/p}.
\end{split}
\end{equation}
For the functional offset of $\bar{h}$, \eqref{E:fdist} and Proposition \ref{P:lip} yield
\begin{equation} \label{E:estimate2}
\begin{split}
\Big( \int_{T_p \times T'_p}  |\qct_p(\bar{x}) - \qct'_p(\bar{y})|^p  \,d\bar{h}(\bar{x},\bar{y})\Big)^{1/p} 
&=\Big( \int_{X \times X} |\ct_p(x)-\ct'_p(y)|^p \, dh(x,y) \Big)^{1/p} \\
&= \Big( \int_{X \times X} |\ct_p(x)-\ct'_p(x) + \ct'_p (x)-\ct'_p(y)|^p \, dh(x,y) \Big)^{1/p} \\
&\leq L w_p (\mu,\mu') + L \Big( \int_{X \times X} d_X^p(x,y) \, dh(x,y) \Big)^{1/p},
\end{split}
\end{equation}
an upper bound that is smaller than that for the structural offset in \eqref{E:estimate1} because $K \geq 1$. Since the coupling $h$ is arbitrary, it follows that
\begin{equation}
\begin{split}
\fks (\mm{F}, \mm{F}') &\leq L w_p(\mu,\mu') + KL \inf_{h \in C(\mu,\mu')}\Big( \int_{X \times X} d_X^p (x,y) \, dh (x,y) \Big)^{1/p} \\
&= L (1+K) w_p(\mu,\mu'),
\end{split}
\end{equation}
as claimed. For a geodesic space $(X,d_X)$ and $\fdist=d_X$, we can choose $L=1$ and $K=1$, as shown in Proposition \ref{P:lip}.
\end{proof}

\begin{figure}
    \centering
    \includegraphics[width=0.9\linewidth]{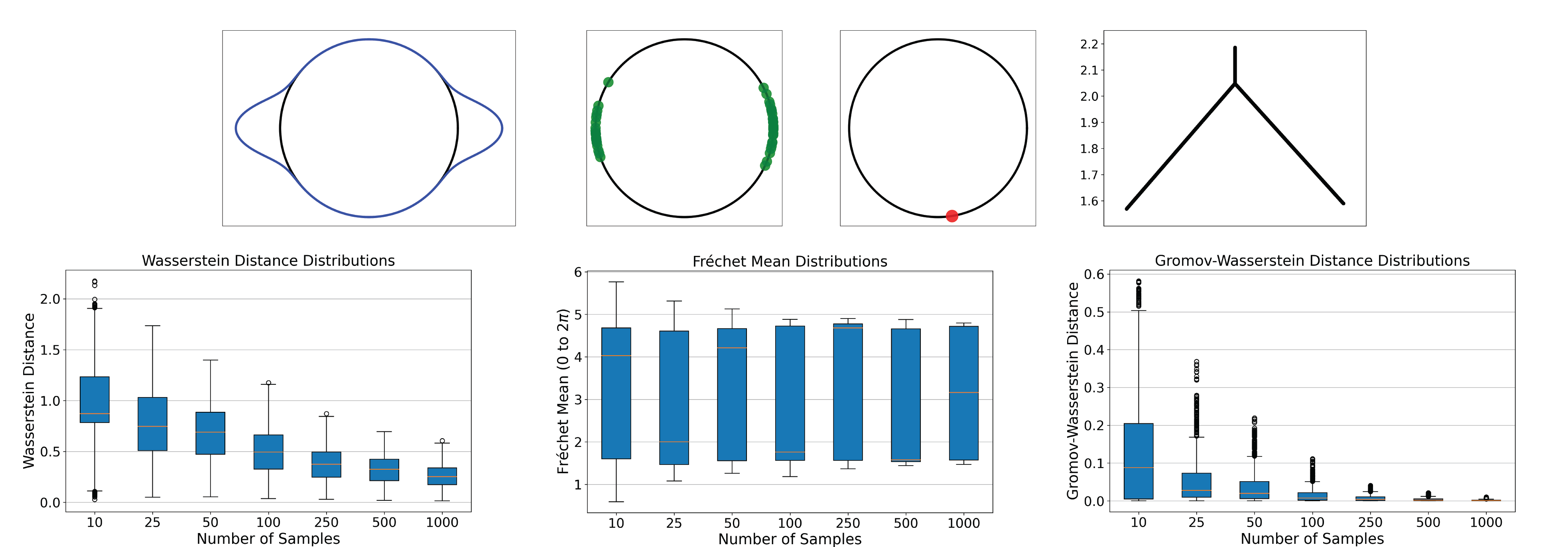}
    \caption{\small Quantitative stability of BMTs (see Example \ref{ex:quantitative}) {\bf Top Row:} A summary of the experiment. We begin with a smooth bimodal distribution on the circle, which is then sampled $n$ times ($n \in \{10,25,50,100,250,500,1000\}$), from which we compute a Fr\'echet mean (i.e., a point on the circle) and a barycentric merge tree ($p=2$). This simulation is repeated 50 times for each $n$. {\bf Bottom Row:} The distribution of Wasserstein distances between samples, treated as empirical distributions, for each value of $n$, shows convergence in number of samples. On the other hand, the Fr\'echet mean has a consistent spread for any number of samples. Finally, the distribution of Gromov-Wasserstein distances between BMTs for each $n$ shows fast convergence.}
    \label{fig:QuantitativeStability}
\end{figure}

\begin{example}[Illustrating the Stability of BMTs]\label{ex:quantitative}
    This example provides a quantitative illustration of the stability of BMTs and the lack thereof for (global) Fr\'echet means. We work with on the unit circle, endowed with geodesic distance, and with the bimodal distribution pictured in Figure \ref{fig:QuantitativeStability}. This distribution is sampled $n$ times, for $n \in \{10,25,50,100,250,500,1000\}$, from which we define the associated empirical distribution. The global Fr\'echet mean (i.e., minimizer of $\sigma_2$) and the BMT, with respect to $\sigma_2$, of this empirical distribution are then calculated, and this experiment is repeated 50 times for each value of $n$. Observe that, as $n$ increases, the Wasserstein distance between the empirical distribution and the original measure converges to zero. Despite this fact, the variance in the resulting Fr\'echet means is essentially constant---the Fr\'echet mean bounces between roughly the north pole and south pole, indicating the instability of this statistic. On the other hand, the Gromov-Wasserstein distance between the associated BMTs converges to zero rather quickly as $n$ increases---i.e., the BMTs are a stable invariant.

    We note that the Gromov-Wasserstein distance (with $p=2$) is used here, rather than the Kantorovich-Sturm distance, due to the availability of numerical solvers for the former; we use the Python Optimal Transport package for the calculation in this experiment~\cite{flamary2021pot}. Since the GW distance lower bounds the KS distance, the stability illustrated here is also guaranteed by the conclusion of Theorem \ref{T:stab}. \hfill $\Diamond$
\end{example}

For an $mm$-space $\mm{X} = (X,d_X,\mu)$, let $d_p^\ast (\mu)$ denote the upper $p$-Wasserstein dimension of $\mm{X}$ (see \cite{weed2019}).

\begin{corollary}[Empirical Estimation of FMTs] \label{C:convergence}
Let $\mm{X} = (X,d_X,\mu)$ be an $mm$-space with $\mu \in \borel{X,d_X;p}$, $p \geq 1$, where $(X,d_X)$ is a connected and locally path connected Polish space, and $(x_i)_{i=1}^\infty$ be independent samples from $\mu$. For $n>0$, let $\mu_n = \sum_{i=1}^n \delta_{x_i}/n$ be the associated empirical measure and $\mm{X}_n = (X, d_X, \mu_n)$. If $(X, d_X)$ has finite connectivity modulus and $\fdist$ is $L$-admissible, then the following hold:
\begin{enumerate}[\rm(i)]
\item (Consistency) $\lim_{n\to \infty} \fks (\ftree{X}, \mm{F} (\mm{X}_n) \to 0$,
($\otimes_\infty \mu)$-almost surely;
\item (Convergence Rate) If $s > d_p^\ast (\mu)$, then there is a constant $C>0$ such that
\[
\mathbb{E} \,[\fks (\ftree{X}, \mm{F}_p (\mm{X}_n))] \leq C {\rm diam}(X) n^{-1/s} \,,
\]
where $C$ depends only on $s$, $p$, $L$ and the connectivity modulus of $(X,d_X)$.
\end{enumerate}
\end{corollary}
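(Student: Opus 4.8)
The plan is to reduce both statements to the Stability Theorem and then invoke known results on the convergence of empirical measures in Wasserstein distance; the Stability Theorem has already done all of the analytic work, so what remains is purely probabilistic. Since $(X,d_X)$ has finite connectivity modulus, I would fix a finite connectivity constant $K = K_X \geq 1$ (recall that $K_X \geq 1$ always, as $r_X \geq d_X$). Applying Theorem \ref{T:stab} to the pair $\mu$ and the empirical measure $\mu_n$ --- both of which lie in $\borel{X,d_X;p}$ --- yields the single inequality that drives everything:
\[
\fks(\ftree{X}, \mm{F}_p(\mm{X}_n)) \leq L(1+K)\, w_p(\mu,\mu_n),
\]
valid for every $n$ and every realization of the samples. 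From here the two parts follow from the corresponding statements about $w_p(\mu,\mu_n)$.

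For part (i), I would appeal to the classical fact that, for $\mu$ with finite $p$-moments, the empirical measures satisfy $w_p(\mu,\mu_n) \to 0$ almost surely. This follows from the almost sure weak convergence $\mu_n \Rightarrow \mu$ (Varadarajan's theorem, i.e.\ the strong law for empirical measures) together with almost sure convergence of the $p$-th moments $\int d_X^p(x_0,\cdot)\,d\mu_n \to \int d_X^p(x_0,\cdot)\,d\mu$ (the strong law applied to the i.i.d.\ variables $d_X^p(x_0,x_i)$), since weak convergence plus convergence of $p$-th moments is equivalent to convergence in $w_p$. Combined with the displayed inequality, this gives $\fks(\ftree{X}, \mm{F}_p(\mm{X}_n)) \to 0$ almost surely.

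For part (ii), I would take expectations in the displayed inequality to obtain $\mathbb{E}[\fks(\ftree{X}, \mm{F}_p(\mm{X}_n))] \leq L(1+K)\,\mathbb{E}[w_p(\mu,\mu_n)]$, and then invoke the rate of Weed and Bach \cite{weed2019}: for any $s > d_p^\ast(\mu)$ there is a constant $C' > 0$, depending only on $s$ and $p$, with $\mathbb{E}[w_p(\mu,\mu_n)] \leq C'\,{\rm diam}(X)\, n^{-1/s}$. Setting $C = L(1+K)\,C'$, which then depends only on $s$, $p$, $L$, and the connectivity modulus $K_X$, produces the asserted bound $\mathbb{E}[\fks(\ftree{X}, \mm{F}_p(\mm{X}_n))] \leq C\,{\rm diam}(X)\, n^{-1/s}$.

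The argument is structurally short precisely because Theorem \ref{T:stab} converts perturbations of $\mu$ into perturbations of the functional BMT, leaving no BMT-side estimate to prove. Accordingly, the only point requiring care --- and the one I regard as the main obstacle --- is to import the two probabilistic results at exactly the generality needed here: for (i) the almost sure $w_p$-convergence must be stated in the finite-$p$-moment (rather than merely bounded-support) setting, and for (ii) one must verify that the Weed--Bach hypotheses apply and that the diameter dependence is tracked correctly, since their bound is cleanest for spaces of unit diameter and the general case follows from the linear homogeneity of $w_p$ under rescaling of the metric.
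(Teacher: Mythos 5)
Your proposal is correct and follows essentially the same route as the paper: both parts reduce immediately to the Lipschitz bound of Theorem \ref{T:stab}, with (i) obtained from almost sure Wasserstein convergence of empirical measures and (ii) from the Weed--Bach rate, tracking the constant $C = L(1+K)C'$. If anything, your treatment of (i) is slightly more careful than the paper's one-line justification (which asserts that $w_p$ metrizes weak convergence), since you correctly note that $w_p$-convergence requires weak convergence \emph{together with} convergence of $p$-th moments, which you supply via the strong law applied to $d_X^p(x_0,x_i)$.
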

\begin{proof}
(i) The statement follows from the Stability Theorem for barycentric  merge trees and the facts that $\mu_n \to \mu$ weakly a.\,s. and $w_p$ metrizes weak convergence of probability measures.

\smallskip

\noindent
(ii) This follows from the Stability Theorem and the estimate $\mathbb{E}\,[w_p(\mu,\mu_n)] \leq C' n^{-1/s}$ by Weed and Bach under the hypothesis that $\text{diam}(X) \leq 1$, where $C'$ depends only on $s$ and $p$ \cite{weed2019}.
\end{proof}


\section{Modes} \label{S:modes}

We begin our discussion of robust detection and estimation of modes of a probability distribution on a metric space $(X,d_X)$ with a review of diffusion distances on $X$ (cf.\,\cite{coifman2006}). This assumes a fixed reference measure $\nu$ on $X$; for example, the volume measure on a Riemannian manifold. 

Let $k \colon X \times X \to \real$ be a kernel function assumed to be continuous and non-negative. For each $x \in X$, define $k_x \colon X \to \real$ by $k_x(z) = k(x,z)$. Given $q \geq 1$, denote the $q$-norm on $L_q (X,\nu)$ by $\qnorm{\,\cdot \,}$. 

\begin{definition} \label{D:diffdist}
Let $k \colon X \times X \to \real$ be such that $k_x \in L_q (X,\nu)$, $\forall x \in X$. For $q \geq 1$, the {\em diffusion distance} $\ddist \colon X \times X \to \real$ is defined as
\[
\ddist (x,y) \coloneqq \qnorm{k_x-k_y} = \Big( \int_X |k(x,z) - k(y,z)|^q \,d \nu (z) \Big)^{1/q} .
\]
\end{definition}
Clearly, $\ddist$ defines a pseudo-metric on $X$ because it is the pullback of the $L_q$-distance under the map $X \to L_q(X,\nu)$ given by $x \mapsto k_x$. Sometimes we drop explicit reference to $q$ in the notation because it is fixed throughout.

\begin{definition}
Let $\mu \in \borel{X,d_X,p}$, $p\geq 1$, and assume that $\ddist$ is an admissible pseudo metric. 
\begin{enumerate}[\rm (i)]
\item A {\em $p$-mode} of $\mm{X}=(X,d_X,\mu)$ with respect to the kernel $k$ is a connected component of a local minimum set of the $p$-deviation function $\kct \colon X \to \real$ given by
\[
\kct (x) = \Big(\int_X \ddist^p (x,y)\, d\mu(y)\Big)^{1/p}.
\]
\item The {\em $p$-mode merge tree} of $\mm{X}$ for the kernel $k$ is the functional $p$-barycentric merge tree $\kftree{X}$ constructed with respect to the pseudo metric $\ddist$.
\end{enumerate}
\end{definition}

\begin{figure}
\centering
\begin{overpic}[abs,unit=1mm,width=0.9\linewidth]{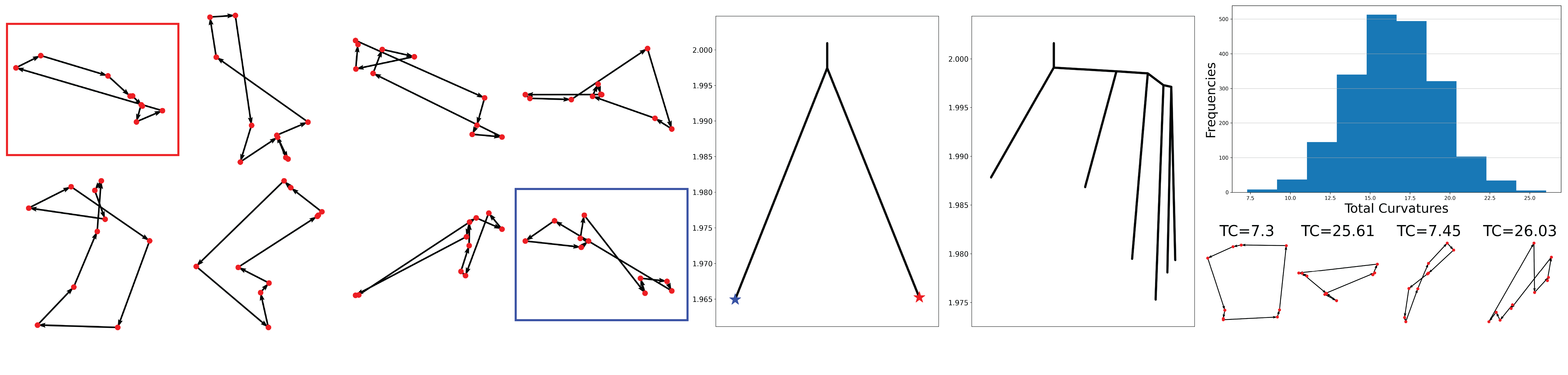} \put(29,0){(a)}
    \put(76,0){(b)}
    \put(100,0){(c)}
    \put(128,0){(d)}
    \end{overpic}
    \caption{\small Mode trees in shape space (see Example \ref{ex:polygons} for details). {\bf (a)} Random samples from the moduli space of 10-sided polygons. We work with a dataset of 2000 samples. {\bf (b)} A pdf is defined as the sum of kernels which decay exponentially in the geodesic distance from the polygons indicated by boxes. The 2-mode tree is with respect to an exponential kernel; the leaves correspond to modes, which are located at the boxed polygons. {\bf (c)} A new pdf is defined which gives higher weight to polygons with especially low or especially high total curvature. The lowest leaves of the mode tree correspond to the polygons shown in the bottom of {\bf (d)}, whose total curvatures lie in the tails of the histogram in the top of {\bf (d)}.}
    \label{fig:polygons}
\end{figure}

\begin{example}[Modes of Distributions on Shape Space]\label{ex:polygons}
    Consider the moduli space $X$ of planar $n$-gons; each point in this space is an equivalence class of a closed $n$-sided polygon, considered up to rescaling, translation and rotation. We endow $X$ with a Riemannian structure by utilizing a (perhaps surprising) correspondence between the moduli space and the Grassmann manifold $\mathrm{Gr}_2(\mathbb{R}^n)$ of 2-planes in real $n$-space~\cite{hausmann1997polygon,cantarella2019random}. The Grassmannian has a canonical Riemannian metric with an explicit formula for geodesic distance~\cite{edelman1998geometry}. Denoting the induced distance on $X$ by $d_X$, we then consider the kernel $k:X \times X \to \mathbb{R}$ defined by $k(x,y) = \exp(-10 \cdot d_X(x,y)^2)$. This kernel is used to compute mode merge trees for two distributions on the space of $10$-gons, as follows. 
    \begin{enumerate}
        \item First, we fix two (equivalence classes of) polygons $x_0,x_1 \in X$ and define a pdf on $X$ by starting with 
        $y \mapsto \exp(-d(x_0,y)^2) + \exp(-d(x_1,y)^2)$,
        then renormalizing by total volume. Intuitively, the modes of the resulting distribution should be located at the points $x_0$ and $x_1$; this intuition is borne out computationally, as is shown in Figure \ref{fig:polygons}. 
        \item Second, we define a pdf by renormalizing the function $x \mapsto (\mathrm{TC}(x) - \widehat{\mathrm{TC}})^4$, where $\mathrm{TC}(x)$ is the \emph{total curvature} of $x \in X$, or the sum of all of its turning angles, and $\widehat{\mathrm{TC}}$ is the average value of total curvature. The resulting pdf assigns heavier weights to polygons with relatively low curvature or relatively high curvature. The mode tree shown in Figure  \ref{fig:polygons} shows that the calculated local modes have extreme total curvatures values. 
    \end{enumerate}
    As was indicated in Example \ref{ex:sphere}, our numerics are performed by discretely approximating the underlying continuous spaces; in particular, we sample 2000 points from the Grassmannian by taking QR decompositions of random $10 \times 2$ matrices. We also remark that the displayed mode trees have been simplified by removing very short leaf edges, to improve visual clarity. \hfill $\Diamond$
\end{example}

By definition, the leaves of $\kftree{X}$ represent the $p$-modes of $\mm{X}$ with respect to the kernel $k$. Theorem \ref{T:stab} and Corollary \ref{C:convergence} ensure that if $(X,d_X)$ is a connected and locally path connected Polish space with finite connectivity modulus and $\ddist$ is an admissible pseudo metric, then the $p$-mode merge trees for distributions with finite $p$-moments are stable and reliably can be estimated from sufficiently large independent samples. Corollary \ref{C:convergence} also provides an estimate for the rate of convergence of mode merge trees associated with empirical measures. As such, we conclude this section by examining conditions on the kernel $k$ that guarantee the admissibility of $\ddist$.

\begin{definition}
The kernel $k \colon X \times X \to \real^+$ is {\em uniformly $L$-Lipschitz}, $L >0$, if $|k(x,z) - k(y,z)| \leq L d_X (x,y)$, for every $x,y,z \in X$.
\end{definition}

\begin{proposition} \label{P:admissible}
If the kernel $k \colon X \times X \to \real$ is uniformly $L$-Lipschitz and $\nu(X) < \infty$, then $\ddist$ is $L m_{q,\nu}$-admissible for $(X,d_X)$, where $m_{q,\nu} = (\nu(X))^{1/q}$.
\end{proposition}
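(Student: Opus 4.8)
The plan is to bound the $L_q$-norm that defines $\ddist$ directly, using the pointwise uniform Lipschitz control on the kernel slices. First I would fix $x,y \in X$ and unwind the definition of the diffusion distance, writing $\ddist(x,y)^q = \int_X |k(x,z)-k(y,z)|^q\, d\nu(z)$.

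Next, the uniform $L$-Lipschitz hypothesis supplies the pointwise estimate $|k(x,z)-k(y,z)| \le L\, d_X(x,y)$ valid for every $z \in X$, where the crucial feature is that the right-hand side does not depend on the integration variable $z$. Substituting this into the integrand and factoring the constant $\big(L\, d_X(x,y)\big)^q$ out of the integral leaves only $\int_X d\nu(z) = \nu(X)$, which is finite by assumption. Taking $q$-th roots of both sides then yields $\ddist(x,y) \le L\, d_X(x,y)\, (\nu(X))^{1/q} = L\, m_{q,\nu}\, d_X(x,y)$, which is precisely the assertion that $\ddist$ is $L\, m_{q,\nu}$-admissible.

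The computation is entirely routine and there is no genuine obstacle to overcome; the argument is a one-line application of the pointwise bound inside the integral followed by the finiteness of $\nu$. The only points meriting a word of care are, first, that it is the uniformity of the Lipschitz constant—its independence of the third variable $z$—that permits the constant to be pulled out of the integral cleanly, and second, that the finiteness hypothesis $\nu(X) < \infty$ is exactly what guarantees the resulting admissibility constant $m_{q,\nu} = (\nu(X))^{1/q}$ is well defined and finite.
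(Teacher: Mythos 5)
Your proposal is correct and follows essentially the same argument as the paper: substitute the pointwise bound $|k(x,z)-k(y,z)| \leq L\, d_X(x,y)$ into the integral defining $\ddist$, pull out the constant, and use $\nu(X) < \infty$ to obtain the factor $m_{q,\nu} = (\nu(X))^{1/q}$. Nothing is missing.
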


\begin{proof}
The assumption that $k$ is uniform $L$-Lipschitz implies that
\begin{equation}
\ddist (x,y) = \Big( \int_X |k(x,z) - k(y,z)|^q \,d \nu (z) \Big)^{1/q} 
\leq L d_X(x,y) \Big(\int_X d\nu(z)\Big)^{1/q} = L m_{q,\nu} d_X(x,y).
\end{equation}
\end{proof}

\begin{example}
Let $(M,d_M)$ be a closed (compact and without boundary) and connected Riemannian manifold equipped with the geodesic distance $d_M$. Denote by $k_t \colon M \times M \to \real$, $t>0$, the heat kernel associated with the Laplace-Beltrami operator on $M$. By \cite[Eq.\,(2.12)]{kasue1994}, there is a constant $C_t(M)>0$ that varies continuously with $t>0$ such that
\begin{equation}
|k_t(x,z)-k_t(y,z)| \leq C_t(M) d_M (x,y),
\end{equation}
$\forall x,y,z \in M$. Thus, $k_t$ is uniformly Lipschitz and Proposition \ref{P:admissible} ensures that $\ddist$ is admissible. Since $(M,d_M)$ is a geodesic space, Theorem \ref{T:stab} guarantees that stability and consistency of mode merge trees hold for distributions on $M$.
\hfill $\Diamond$
\end{example}

Proposition \ref{P:admissible} does not apply to the heat kernel on $\real^d$ with $\nu$ the Lebesgue measure because of the finiteness condition on $\nu$. Since this is an important case for applications, particularly for $q=2$, in the next example we show by a direct calculation that, for $q=2$, the diffusion distance on $\real^d$ is admissible with respect to the Euclidean distance.

\begin{example} \label{E:diffrd}
Let $k_t \colon \real^d \times \real^d \to \real$, $t>0$, be the heat kernel that is given by $k_t (x,y) = \frac{1}{(4\pi t)^{d/2}} \exp (-\|x-y\|^2/4t)$. For $q=2$ and $t>0$, denote the corresponding diffusion distance (with $\nu$ the Lebesgue measure) by $\theta_t \colon \real^d \times \real^d \to \real$. A routine calculation (see \cite[Eq.\,6]{diaz2019probing}) shows that $\theta_t^2 (x,y) = 2 \big( \frac{1}{(8\pi t)^{d/2}} - k_{2t} (x,y)\big)$. For a fixed pair $(x,y)$, parameterize the segment from $x$ to $y$ by $\beta(s) = (1-s)x+sy$, $s \in I$, and let
\begin{equation}
g(s) \coloneqq \theta_t^2 (x, \beta(s)) =   \frac{2}{(8\pi t)^{d/2}} \left(1 - \exp \Big(-\frac{\|x-\beta(s)\|^2}{8t}\Big)\right).
\end{equation}
Then, $\theta_t^2(x,y) = g(1) - g(0)= \int_0^1 g'(s)\,ds$. The derivative of $g$ is given by
\begin{equation} \label{E:derivative}
g'(s) = \frac{(y-x)\cdot (\beta(s)-x) }{2t (8\pi t)^{d/2}} \exp \Big( -\frac{\|x-\beta(s)\|^2}{8t}\Big) \,.
\end{equation}
Since $\|\beta(s)-x\| \leq \|x-y\|$, from \eqref{E:derivative} we obtain
\[
\theta_t^2 (x,y) \leq \sup_{s\in I} |g'(s)| \leq \|x-y\|^2 / 2t (8\pi t)^{d/2}.
\]
This implies that $\theta_t (x,y) \leq \|x-y\|/\sqrt{2t} (8\pi t)^{d/4}$, showing that $\theta_t$ is $L_t$-admissible with $L_t=1/\sqrt{2t} (8\pi t)^{d/4}$.
\hfill $\Diamond$
\end{example}

Having shown in Example \ref{E:diffrd} that the diffusion distance $\theta_t$  is admissible, we now recall an interpretation given in \cite[Eq.\,10]{diaz2019probing} of the Fr\'{e}chet variance function $V_{p,t} \colon \real^d \to \real$, with respect to $\fdist_t$, for $p=2$. At scale $t/2$, we have
\begin{equation}
V_{2,t/2}(x) = \int_{\real^d} d_{t/2}^2(x,y)\,d\mu(y) = \frac{2}{(4\pi t)^{d/2}} - 2 u(t,x),
\end{equation}
where $u(t,x)$ is the solution of the heat equation $\partial_t u = \Delta u$ with initial condition $\mu$. In particular, up to a scaling factor $2$, $V_{2,t/2}$ is an upside down version of the Gaussian kernel density estimator of $\mu$ at scale $t$. Therefore, in this special case, the modes of $\mu$ defined as the connected components of the local minimum sets of $V_{2,t/2}$ coincide with the view of modes as local maxima of kernel density estimators. Note, however, that in the proof that merge trees provide a stable and consistent representation of the modes of a distribution, we do not make assumptions such as local minima occurring at isolated points. Moreover, our approach also shows how to define modes for other values of $p$ in a stable manner. For example, for $p=1$, mode merge trees can be interpreted as local-global summaries of medians of probability measures.


\section{Discrete Models} \label{S:discrete}

Corollary \ref{C:convergence} establishes the consistency of BMT estimators derived from independent samples of a distribution on a connected and locally connected Polish space $(X,d_X)$. With an eye towards computation and applications, we now discuss provably accurate combinatorial approximations to barycentric merge trees associated with empirical measures on compact spaces. We begin with a discussion of general combinatorial BMTs and then apply the construction to empirical measures on $(X,d_X)$. 

\subsection{Combinatorial Barycentric Merge Trees}

Let $G=(V,E)$ be a finite simple graph. We write (unoriented) edges as 2-element sets $\{v,w\} \in E$. In particular, loops $\{v,v\} = \{v\}$ are disallowed. We also assume that $V$ is equipped with a metric $d_V$ and a probability measure $\omega$ so that $\mm{V}=(V,d_V,\omega)$ is a finite $mm$-space. For $p \geq 1$, as before, define the $p$-deviation function $\ct_p \colon V \to \real$ by
\begin{equation}
\ct_p (v) = \Big(\sum_{v'\in V} d_V^p (v,v')\omega (v')\Big)^{1/p}.
\end{equation}
For $t \in \real$, let $G_t \subseteq G$ be the subgraph spanned by the sublevel set $\ct_p^{-1} ([0,t]) \subseteq V$; that is, the largest subgraph of $G$ whose vertex set is $\ct_p^{-1} ([0,t])$. On $V$, define an equivalence relation by $v \sim w$ if there exists $t\geq 0$ such that: (i) $\ct_p(v)=\ct_p(w)=t$ and (ii) $v$ and $w$ fall in the same connected component of $G_t$. The quotient $\qV = V/\sim$ is viewed as the vertex set of a (simple) graph $T_p = (\qV,E_p)$ whose edge set is defined by $\{[v],[w]\} \in E_p$ if $\{v,w\} \in E$ and $[v]\ne [w]$. Here, $[\,\cdot\,]$ denotes equivalence class. The quotient graph $T_p$ is the {\em $p$-barycentric merge tree} of $\mm{V}$. Clearly, $\ct_p$ induces a function $\qct_p \colon \qV \to \real$ such that $\ct_p = \qct_p \circ \alpha_p$, where $\alpha_p \colon V \to \qV$ is the quotient map. 

As in the continuous case, we introduce a poset structure and a pseudo-metric on the node set of $T_p$. For $[v] \in \qV$, let $t = \qct_p ([v])=\ct_p(v)$ and $C_v \subseteq G$ be the connected component of $G_t$ containing $v$. It is simple to verify that $C_{[v]}$ is independent of the choice of the representative $v$ of the equivalence class. Define a partial order by $[v] \preceq [w]$ if $C_v \subseteq C_w$. A vertex $[z] \in \qV$ is a merge point for $[v]$ and $[w]$ if $[v] \preceq [z]$ and $[w] \preceq [z]$. The {\em merge set} $\Lambda[v,w]$ is defined as the collection of all merge points for $[v]$ and $[w]$. The function $\gpdist \colon \qV \times \qV \to \real$, given by
\begin{equation}
\gpdist([v],[w]) = \inf_{[z] \in \Lambda[v,w]} (\ct_p (z)-\ct_p(v))\vee (\ct_p (z)-\ct_p(w)),
\end{equation}
defines a pseudo-metric on $\qV$. As in Proposition \ref{P:distance1}, we can interpret $\gpdist ([v],[w])$ in terms of (combinatorial) paths $\gamma$ connecting $v$ to $w$, where a path is given by a finite sequence $v=v_0, v_1, \ldots, v_{m-1}, v_m =w$ of vertices of $G$ such that $\{v_{i-1},v_i\} \in E$, for every $1 \leq i \leq m$. The distance $\gpdist$ may be expressed as $\gpdist ([v],[w]) =   \inf_{\gamma \in \Gamma_c(v,w)} \,\rho_c (\gamma)$, where $\Gamma_c (v,w)$ is the collection of all combinatorial paths from $v$ to $w$ and 
\begin{equation} 
\rho_c (\gamma) = \max_{v_i \in \gamma}\big(\ct_p(v_i)-\ct_p(v)\big) \vee \big(\ct_p(v_i)-\ct_p(w)\big).
\end{equation}
Setting $\omega_p \coloneqq \alpha_{p\sharp} (\omega)$, the pushforward of $\omega$ to $\qV$, we obtain a finite functional $mm$-space $(\qV,\gpdist,\omega_p, \qct_p)$.


\subsection{From Continuous to Discrete} \label{S:c2d}

Let $(X,d_X)$ be a compact, connected and locally path connected metric space. We construct discrete graph models $G=(V,E)$ of $X$ from finite $\delta$-coverings $V \subseteq X$, $\delta>0$. A subset $V=\{v_1, \ldots, v_m\} \subseteq X$ is a {\em $\delta$-covering} if for each $x\in X$, there exists $v_i \in V$ such that $d_X(x,v_i)\leq \delta$. The parameter $\delta$ is to be viewed as controlling the desired accuracy of the approximation. We denote the induced metric on $V$ by $d_V$. The edge set is defined by $\{v_i,v_j\} \in E$ if $d_V (v_i,v_j) \leq 3 \delta$ and $i\ne j$.

Given a dataset $X_n = \{x_1, \ldots, x_n\} \subseteq X$, let $p_n$ be the normalized counting measure on $X_n$, given by $p_n(x_i)=1/n$, and $\mu_n = \sum_{i=1}^n \delta_{x_i}/n$ be the associated empirical measure on $X$. Define a map $\phi \colon X_n \to V$, where $\phi(x_i)$ is arbitrarily chosen among the points $v_j \in V$ that satisfy $d_X(x_i,x_j) \leq \delta$. Define $\omega_n$ as the probability measure on $V$ given by $\omega_n (v_i) = |\phi^{-1} (v_i)|/n$, the pushforward of $p_n$ under $\phi$. We refer to the finite $mm$-space $\mm{V}_n = (V,d_V, \omega_n)$ (with the underlying graph structure described above) as a {\em combinatorial $\delta$-approximation} to $\mm{X}_n = (X,d_X, \mu_n)$. Further, if $\imath \colon V \hookrightarrow X$ denotes the inclusion map and $\nu_n = \imath_\sharp (\omega_n)$, we obtain yet another $mm$-space $\mm{W}_n = (X,d_X,\nu_n)$. We thus have three $mm$-spaces in play: (i) $\mm{X}_n = (X,d_X, \mu_n)$ which is the object of primary interest; (ii) $\mm{V}_n = (V,d_V, \omega_n)$ intended as a combinatorial proxy for $\mm{X}_n$; and (iii) $\mm{W}_n = (X,d_X,\nu_n)$ that bridges the discrete and continuous models. 

Note that, by construction, if $\cct{V} \colon V \to \real$ and $\cct{W} \colon X \to \real$ are the $p$-deviation functions of $\mm{V}_n$ and $\mm{W}_n$, then $\cct{V} (v) = \cct{W}(v)$, for any  $v \in V$. Moreover, $w_p (\mu_n,\nu_n) \leq \delta$, for any $p \geq 1$. Indeed, let $\psi \colon X_n \to X \times X$ be given by $\psi(x_i) = (x_i, \imath \circ\phi(x_i))$ and $h = \psi_\sharp (p_n)$. Then, $h \in C(\mu_n,\nu_n)$ and
\begin{equation}
w_p (\mu_n,\nu_n) \leq \Big(\int_{X \times X} d^p_X (x,y) dh(x,y)\Big)^{1/p} = \Big(\frac{1}{n}\sum_{i=1}^n  d^p_X(x_i,\phi(x_i)) \Big)^{1/p} \leq \delta.
\end{equation}
Therefore, under the assumptions of Theorem \ref{T:stab}, the functional barycentric merge trees for $\mm{X}_n$ and $\mm{W}_n$ satisfy
\begin{equation} \label{E:empcomb}
\fks \big(\cftree{X},\cftree{W}\big) \leq L(1+K)\delta \,.
\end{equation}
As such, to establish the validity of $\cftree{V}$ as an approximation to $\cftree{X}$, it suffices to obtain $\delta$-controlled upper bounds for the Kantorovich-Sturm distance $\fks \big(\cftree{V},\cftree{W}\big)$. 

Let $\cqmap{V} \colon V \to \ctree{V}$ and $\cqmap{W} \colon X \to \ctree{W}$ be the quotient maps and write the functional BMTs as $\cftree{V} = \big(\ctree{V}, \cdist{V}, \omega_{p,n}, \cqct{V}\big)$ and $\cftree{W} = \big(\ctree{W}, \cdist{W}, \nu_{p,n}, \cqct{W}\big)$, where $\omega_{p,n}$ and $\nu_{p,n}$ are the pushforwards of $\omega_n$ and $\nu_n$ under the quotient maps to the respective merge trees. As in the proof of the Stability Theorem, the key step in estimating $\fks \big(\cftree{V},\cftree{W}\big)$ is the construction of a metric coupling between $\cdist{V}$ and $\cdist{W}$, starting from a relation $R \subseteq \ctree{V} \times \ctree{W}$, which we define as
\begin{equation} \label{E:relation}
R\coloneqq \left\{\big(\cqmap{V} (v), \cqmap{W} (v)\big) \colon v\in V\right\}.
\end{equation}

\begin{lemma} \label{L:distortion}
If $K\geq 1$ is a connectivity constant for $(X,d_X)$, $\fdist$ is $L$-admissible, and $V \subseteq X$ a $\delta$-covering of $(X,d_X)$, then the distortion of the relation $R$ satisfies $dis(R) \leq KL \delta$. 
\end{lemma}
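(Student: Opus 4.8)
The plan is to reduce the distortion of $R$ to a comparison of the two merge pseudometrics evaluated on pairs of covering points, and then to compare these through the bottleneck (path) description of merge distances. Since every element of $R$ has the form $\big(\cqmap{V}(v),\cqmap{W}(v)\big)$ with $v\in V$, unwinding the definition of distortion in \eqref{E:distortion} gives
\[
dis(R)=\sup_{v,w\in V}\big|\cdist{V}\big(\cqmap{V}(v),\cqmap{V}(w)\big)-\cdist{W}\big(\cqmap{W}(v),\cqmap{W}(w)\big)\big|.
\]
The crucial simplification is that the two deviation functions agree on the covering, $\cct{V}(v)=\cct{W}(v)$ for all $v\in V$, as already recorded. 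Hence both merge distances are built from one and the same height function on $V$; only the notion of connectivity differs, combinatorial edges of $G$ for $\cdist{V}$ versus continuous paths in $X$ for $\cdist{W}$. Writing $\sigma$ for this common function, I would use the path description of both distances: $\cdist{V}\big(\cqmap{V}(v),\cqmap{V}(w)\big)=\inf_{\gamma\in\Gamma_c(v,w)}\rho_c(\gamma)$ over combinatorial paths, and $\cdist{W}\big(\cqmap{W}(v),\cqmap{W}(w)\big)=\inf_{\gamma}\rho(\gamma)$ over continuous paths in $X$ via Proposition \ref{P:distance1}. Both are bottleneck quantities, namely the infimal height at which $v$ and $w$ become connected, minus $\sigma(v)\wedge\sigma(w)$.

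For the easy inequality $\cdist{V}\le\cdist{W}+L\delta$, I would take a near-optimal continuous path $\gamma$ joining $v$ to $w$, sample it finely, and replace each sampled point by a covering vertex within distance $\delta$. Fineness of the sampling together with the $3\delta$ edge threshold guarantees that consecutive chosen vertices are adjacent in $G$, so they constitute a combinatorial path; and Proposition \ref{P:lip} (which makes $\sigma$ $L$-Lipschitz with respect to $d_X$) bounds the height of each chosen vertex by the height of the nearby point on $\gamma$ plus $L\delta$. Passing to the supremum along the path and then to the infimum over $\gamma$ yields $\rho_c\le\rho(\gamma)+L\delta$, hence this one-sided bound, which is dominated by $KL\delta$ since $K\ge 1$.

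The hard inequality $\cdist{W}\le\cdist{V}+KL\delta$ is the main obstacle, and it is where the connectivity modulus enters. Starting from a near-optimal combinatorial path $v=v_0,v_1,\dots,v_m=w$, I would realize each edge $\{v_{i-1},v_i\}$ by a continuous path in $X$ drawn from the definition of the merge distance function $r_\fdist$: by Proposition \ref{P:kmodulus}, for every $\epsilon>0$ there is a path from $v_{i-1}$ to $v_i$ along which $\fdist(\cdot,v_{i-1})\vee\fdist(\cdot,v_i)\le\epsilon+r_\fdist(v_{i-1},v_i)\le\epsilon+KL\,d_X(v_{i-1},v_i)$. Since $\sigma$ is $1$-Lipschitz with respect to $\fdist$ (Proposition \ref{P:lip}), the value of $\sigma$ along this connecting segment exceeds $\sigma(v_{i-1})\wedge\sigma(v_i)$ by at most $r_\fdist(v_{i-1},v_i)$, which the edge condition controls by a fixed multiple of $\delta$. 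Concatenating these segments produces a continuous path whose peak height exceeds the combinatorial peak $\max_i\sigma(v_i)$ by at most this amount, so that $\rho\le\rho_c+KL\delta$ after passing to the infimum. Combining the two inequalities and using $K\ge 1$ gives the stated bound $dis(R)\le KL\delta$.

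The delicate point throughout is the bookkeeping of excursions. In the hard direction one must ensure both that the continuous filling of each combinatorial edge raises $\sigma$ by no more than the connectivity modulus permits, and, more importantly, that these local excursions do \emph{not} accumulate along the path. This is precisely why the bottleneck formulation, with its supremum of $\sigma$ along paths rather than an additive cost, is essential here, and why the factor $K$ enters only multiplicatively through Proposition \ref{P:kmodulus} instead of scaling with the number of edges traversed. Tracking the covering and edge radii through these estimates is the only subtle step; everything else is the routine translation between the combinatorial $\rho_c$ and the continuous $\rho$ afforded by the equality $\cct{V}=\cct{W}$ on $V$.
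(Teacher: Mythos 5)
Your proposal is correct and takes essentially the same route as the paper: the same two one-sided bounds, proved the same way — discretizing a near-optimal continuous path through the $\delta$-covering to get $\cdist{V}\le\cdist{W}+L\delta$, and filling in the edges of a combinatorial path with continuous paths supplied by the connectivity constant for the reverse bound, with the bottleneck (sup) structure preventing accumulation of excursions. Your only departure — invoking Proposition \ref{P:kmodulus} together with $1$-Lipschitzness of the deviation function with respect to $\fdist$, where the paper uses the connectivity constant for $d_X$ plus $L$-Lipschitzness with respect to $d_X$ — is the same estimate unwound; note also that your write-up inherits the paper's own constant-bookkeeping slip, since the $3\delta$ edge threshold actually yields an excursion bound of $3KL\delta$ (not $KL\delta$) in the hard direction.
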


\begin{proof}
Let $v,w \in V$. Given a continuous path $\gamma \in \Gamma(v,w)$, take a grid $0=t_0 < t_1 < \ldots <t_N=1$ fine enough so that $d_X(\gamma(t_i), \gamma(t_{i-1})) \leq \delta$, for every $1 \leq i \leq N$. Let $z_0=v$, $z_N=w$, and for $0<i<N$, set $z_i \in V$ be such that $d_X(\gamma(t_i),z_i) \leq \delta$. By the triangle inequality, $d_V(z_{i-1}, z_i) \leq 3\delta$, which implies that $\{z_{i-1}, z_i\}$ is an edge of $G=(V,E)$. Define $\beta$ to be the combinatorial path $v=z_0, z_1, \ldots, z_N=w$. Then, we have
\begin{equation}
\begin{split}
\cct{V}(z_i)-\cct{V}(v) &= \cct{W}(z_i)- \cct{W}(\gamma(t_i)) + \cct{W}(\gamma(t_i)) -\cct{W}(v) \\
&\leq L d_X(z_i, \gamma(t_i)) + \sup_{t \in I} \cct{W}(\gamma(t))-\cct{W}(v) \\
&\leq L \delta + \sup_{t \in I} \cct{W}(\gamma(t))-\cct{W}(v).
\end{split}
\end{equation}
Similarly,
\begin{equation}
\cct{V}(z_i)-\cct{V}(w) \leq L \delta + \sup_{t \in I} \cct{W}(\gamma(t))-\cct{W}(w). 
\end{equation}
Therefore, 
\begin{equation}
\begin{split}
\rho_c(\beta) &= \max_i \big(\cct{V}(z_i)-\cct{V}(v) \vee \cct{V}(z_i)-\cct{V}(w)\big) \\
&\leq L\delta + \sup_{t \in I} \big(\cct{W}(\gamma(t))-\cct{W}(v)\big) \vee \big(\cct{W}(\gamma(t))-\cct{W}(w)\big) = L\delta + \rho(\gamma).
\end{split}
\end{equation}
Taking the infimum over $\gamma$, we obtain
\begin{equation} \label{E:distortion1}
\cdist{V}(\cqmap{V}(v),\cqmap{V}(w)) \leq L\delta + \cdist{W} (\cqmap{W}(v),\cqmap{W}(w)),
\end{equation}
for any $v,w\in V$. Conversely, let an edge path $\beta \in \Gamma_c(v,w)$ be given by the sequence $v=z_0, z_1, \ldots, z_N=w$ in $V$. Given $\epsilon>0$, for each $1 \leq i \leq N$, let $\gamma_i$ be a path from $z_{i-1}$ to $z_i$ with the property that 
\begin{equation}
\sup_t \big( d_X(v, \gamma_i (t)) \vee d_X(\gamma_i(t),w) \big) \leq \epsilon + K d_X(z_{i-1}, z_i),
\end{equation}
whose existence is guaranteed by the fact that $K$ is a connectivity constant for $(X,d_X)$. Concatenating all $\gamma_i$, $1 \leq i \leq N$, we obtain $\gamma \in \Gamma(v,w)$ such that for each $t \in I$, there is $i(t) \in \{0,1,\ldots,N\}$ such that $d_X(\gamma(t),z_{i(t)}) \leq \epsilon + K \delta$. Hence,
\begin{equation} \label{E:distortion2}
\begin{split}
\cct{W}(\gamma(t))-\cct{W}(v) &= \cct{W}(\gamma(t))- \cct{W}(z_{i(t)}) + \cct{V}(z_{i(t)}) - \cct{V}(v) \\
&\leq L d_X(\gamma(t), z_{i(t)}) + \max_i \cct{V}(z_i) - \cct{V}(v) \leq \epsilon + KL\delta + \rho_c(\beta).
\end{split}
\end{equation}
An identical argument shows that 
\begin{equation} \label{E:distortion3}
\cct{W}(\gamma(t))-\cct{W}(w) \leq \epsilon + KL\delta + \rho_c(\beta). 
\end{equation}
Since $\epsilon>0$ is arbitrary, taking the infimum over $\beta$ and using \eqref{E:distortion2} and \eqref{E:distortion3}, we obtain
\begin{equation} \label{E:distortion4}
\cdist{W} (\cqmap{W}(v),\cqmap{W}(w))\leq KL \delta + \cdist{V}(\cqmap{V}(v),\cqmap{V}(w)),
\end{equation}
for any $v,w \in V$. Combining \eqref{E:distortion1} and \eqref{E:distortion4}, since $K\geq 1$, we get $dis (R) \leq KL \delta$, as claimed.
\end{proof}

\begin{theorem} \label{T:approx}
If $K\geq 1$ is a connectivity constant for the compact metric space $(X,d_X)$, $\fdist$ is $L$-admissible, and $V \subseteq X$ a $\delta$-covering of $(X,d_X)$, then $\fks \big(\cftree{V},\cftree{W}\big) \leq KL\delta/2$.
\end{theorem}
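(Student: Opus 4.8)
The plan is to exhibit a single pair $(\delta_r, \bar{h})$ consisting of a metric coupling and a probabilistic coupling, and to show that both the structural and functional offsets of this pair are bounded by $KL\delta/2$; since $\fks$ is, by Definition \ref{D:fgw}, the infimum over all such pairs of the maximum of the two offsets, this yields the claim. For the metric coupling I would invoke Lemma \ref{L:distortion}, which gives $dis(R) \leq KL\delta$ for the relation $R$ of \eqref{E:relation}. Setting $r \coloneqq KL\delta/2$, we have $dis(R) \leq 2r$, so Proposition \ref{P:r2c} guarantees that the function $\delta_r$ on $\ctree{V} \sqcup \ctree{W}$ defined as in \eqref{E:deltar} is a pseudo metric coupling between $\cdist{V}$ and $\cdist{W}$.

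For the probabilistic coupling, the natural choice is $\bar{h} \coloneqq \Phi_\sharp(\omega_n)$, where $\Phi \colon V \to \ctree{V} \times \ctree{W}$ is given by $\Phi(v) = (\cqmap{V}(v), \cqmap{W}(v))$. Since $\nu_n = \imath_\sharp(\omega_n)$, the two marginals of $\bar{h}$ are $\cqmap{V}_\sharp(\omega_n) = \omega_{p,n}$ and $(\cqmap{W} \circ \imath)_\sharp(\omega_n) = \nu_{p,n}$, so $\bar{h} \in C(\omega_{p,n}, \nu_{p,n})$; the key structural feature is that $\bar{h}$ is supported on the correspondence $R$.

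The structural offset is then immediate. For $v \in V$, the pair $(\cqmap{V}(v), \cqmap{W}(v))$ lies in $R$, so taking this very pair as the infimizing element in \eqref{E:deltar} forces $\delta_r(\cqmap{V}(v), \cqmap{W}(v)) = r$ (the bracketed infimum is $0$, while $\delta_r \geq r$ always). Pushing the integral back to $V$ along $\Phi$, the structural offset reduces to $\left(\int_V r^p \, d\omega_n\right)^{1/p} = r = KL\delta/2$. For the functional offset I would use the identity $\cct{V}(v) = \cct{W}(v)$ for all $v \in V$, recorded in Section \ref{S:c2d}: since $\cqct{V}(\cqmap{V}(v)) = \cct{V}(v)$ and $\cqct{W}(\cqmap{W}(v)) = \cct{W}(v)$, the integrand $|\cqct{V} - \cqct{W}|$ vanishes $\bar{h}$-almost everywhere, so the functional offset is $0$. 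Taking the maximum of the two offsets gives $\fks(\cftree{V}, \cftree{W}) \leq KL\delta/2$.

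The substantive work is entirely contained in Lemma \ref{L:distortion}, whose two-sided distortion estimate requires the grid and chaining argument relating continuous paths in $X$ to combinatorial paths in $G$ and back; once that lemma is in hand, the present proof is essentially bookkeeping. The only points demanding care are verifying that $\bar{h}$ has the correct marginals (which hinges on $\nu_n = \imath_\sharp(\omega_n)$) and observing that concentrating $\bar{h}$ on $R$ collapses the structural offset to \emph{exactly} $r$ rather than merely bounding it. The vanishing of the functional offset is the cleanest step, resting on the elementary fact that $\mm{V}_n$ and $\mm{W}_n$ share the same deviation values on $V$ by construction.
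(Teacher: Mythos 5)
Your proposal is correct and follows essentially the same route as the paper: the same metric coupling $\delta_r$ with $r = KL\delta/2$ obtained from Lemma \ref{L:distortion} and Proposition \ref{P:r2c}, the same diagonal probabilistic coupling $\bar{h}$ supported on $R$ (the paper writes it as $(\cqmap{V}\times\cqmap{W})_\sharp \Delta_\sharp(\omega_n)$, identical to your $\Phi_\sharp(\omega_n)$), the same exact evaluation $\delta_r(\cqmap{V}(v),\cqmap{W}(v)) = r$ for the structural offset, and the same vanishing of the functional offset from $\cct{V}(v)=\cct{W}(v)$ on $V$. No gaps; your accounting of where the substantive work lies (Lemma \ref{L:distortion}) matches the paper's structure.
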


\begin{proof}
Proposition \ref{P:r2c} and Lemma \ref{L:distortion} imply that, for $r=KL\delta/2$, there is a metric coupling $\delta_r \in M(\cdist{V},\cdist{W})$ such that for $\bar{v} \in \ctree{V}$ and $\bar{x} \in \ctree{W}$,
\begin{equation}
\delta_r(\bar{v},\bar{x}) = \delta_r(\bar{x},\bar{v}) = r + \inf_{w \in V} \cdist{V}(\bar{v}, \cqmap{V}(w)) + \cdist{W}(\cqmap{W}(w),\bar{x}) \,.
\end{equation} \label{E:cqbound}
In particular, $\delta_r(\cqmap{V} (v),\cqmap{W}(v)) = r$.

Let $\Delta \colon V \to V \times X$ be given by $\Delta(v) = (v,v)$. Since $\nu_n = \imath_\sharp (\omega_n)$, $h = \Delta_\sharp (\omega_n)$ gives a coupling between $\omega_n$ and $\nu_n$, which in turn, induces a coupling $\bar{h} \coloneqq {(\cct{V{}} \times \cct{W})}_\sharp (h) \in  C(\omega_{p,n},\nu_{p,n})$. Since the coupling $h$ is diagonal, by Proposition \ref{P:qlip} and \eqref{E:cqbound}, the structural offset of $(\delta_r,\bar{h})$ satisfies
\begin{equation}
\Big( \int_{\ctree{V} \times \ctree{W}} \delta_r^p (\bar{v}, \bar{x}) \, d\bar{h} (\bar{v}, \bar{x}) \Big)^{1/p} =\Big(\int_V \delta_r^p (\cqmap{V}(v),\cqmap{W}(v))\, d\omega_n (v)\Big)^{1/p} \leq r .
\end{equation}
As $\cct{V} (v) = \cct{W}(v)$, $\forall v \in V$, the functional offset of $\bar{h}$ vanishes. Indeed,
\begin{equation}
\int_{\ctree{V} \times \ctree{W}} |\cqct{V} (\bar{v}) - \cqct{W} (\bar{x})|^p \, d\bar{h} (\bar{v}, \bar{x}) = \int_V |\cct{V}(v) - \cct{W} (v)|^p \, d\omega_n (v) =0.
\end{equation}
Therefore, $\fks \big(\cftree{V},\cftree{W}\big) \leq r = KL\delta/2$.
\end{proof}

\begin{corollary}
If $K\geq 1$ is a connectivity constant for a compact metric space $(X,d_X)$, $\fdist$ is $L$-admissible, and $V \subseteq X$ a $\delta$-covering of $(X,d_X)$, then $\fks \big(\cftree{V},\cftree{X}\big) \leq C_{K,L}\,\delta$, where $C_{K,L} = L + 3KL/2$.
\end{corollary}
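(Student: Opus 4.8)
The plan is to obtain the bound by chaining the two estimates already in hand through the bridging space $\mm{W}_n$. Inequality \eqref{E:empcomb} controls $\fks(\cftree{X},\cftree{W})$ and Theorem \ref{T:approx} controls $\fks(\cftree{V},\cftree{W})$, so the natural move is to invoke the triangle inequality for the functional Kantorovich--Sturm distance,
\begin{equation*}
\fks(\cftree{V},\cftree{X}) \leq \fks(\cftree{V},\cftree{W}) + \fks(\cftree{W},\cftree{X}).
\end{equation*}
Substituting $\fks(\cftree{V},\cftree{W}) \leq KL\delta/2$ from Theorem \ref{T:approx} and $\fks(\cftree{W},\cftree{X}) \leq L(1+K)\delta$ from \eqref{E:empcomb} (using that $\fks$ is symmetric), and then collecting terms, yields $L\delta + \tfrac{3}{2}KL\delta = (L + 3KL/2)\,\delta = C_{K,L}\,\delta$, which is exactly the claim.

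The one ingredient that genuinely requires justification is the triangle inequality for $\fks$, the functional counterpart of Sturm's theorem that $\dks$ is a metric. If I did not wish to invoke it as a known property, I would reprove it by the standard gluing argument: take near-optimal metric couplings and probabilistic couplings realizing $\fks(\cftree{V},\cftree{W})$ and $\fks(\cftree{W},\cftree{X})$, glue the two probabilistic couplings over their shared marginal on $\cftree{W}$ via the Gluing Lemma for measures, and assemble the corresponding glued pseudometric on the disjoint union of $\ctree{V}$ and $\ctree{X}$. The structural offset of the glued pair is then bounded by the sum of the two structural offsets through the triangle inequality of the glued metric, while the functional offset is bounded by the sum of the two functional offsets through the triangle inequality for the absolute value. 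Since the maximum of two quantities, each dominated by a corresponding sum, is itself dominated by the sum of the maxima, the $\vee$ appearing in Definition \ref{D:fgw} is compatible with this subadditivity. I expect this gluing verification---rather than the trivial arithmetic---to be the only part demanding care, and it proceeds exactly as for the Kantorovich--Sturm and fused Gromov--Wasserstein distances treated in the cited literature.
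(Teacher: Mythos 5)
Your proposal is correct and matches the paper's own argument, which likewise combines inequality \eqref{E:empcomb} with Theorem \ref{T:approx} via the triangle inequality for $\fks$ (the paper states this in one line). Your additional gluing-lemma justification of the triangle inequality for $\fks$ is sound and simply makes explicit a property the paper takes for granted.
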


\begin{proof}
This follows from \eqref{E:empcomb} and Theorem \ref{T:approx}.
\end{proof}


\subsection{Binning}

Section \ref{S:c2d} constructs combinatorial approximations to barycentric merge trees with accuracy guarantees. A drawback of the construction is that, generically, the values of a $p$-deviation function on the nodes of the tree are all distinct, making the associated BMT rather large and complex with many small branches. Such merge trees do not achieve the goal of providing a concise summary of the original probability distribution. For this reason, we introduce an additional binning step to simplify the function $\cct{V} \colon V \to \real$ before constructing the merge tree. Given $\varepsilon>0$, define $\bcct \colon V \to \real$ by $\bcct(v) = \varepsilon\lfloor \cct{V}(x)/\varepsilon \rfloor$, an approximation to $\cct{V}$ that satisfies $|\cct{V}(v)-\bcct(v)| \leq \varepsilon$, for every $v\in V$. Here, $\lfloor \,\cdot\, \rfloor$ denotes the floor function. This has the effect of approximating $\cct{V}$ by a function that has constant value $i\varepsilon$ on sets of the form $\cct{V}^{-1} \big([i \varepsilon, (i+1) \varepsilon)\big) \subseteq V$, where $i$ is an arbitrary non-negative integer.

Construct a combinatorial functional merge tree $\bcftree{V}$, viewed as an approximation to $\cftree{V}$, from the same graph $mm$-structure $(V,E,d_V, \omega_n)$ only replacing $\cct{V}$ with $\bcct$. 

\begin{theorem}
For any $\varepsilon>0$, $\fks \big(\cftree{V}, \bcftree{V}\big) \leq \varepsilon$. 
\end{theorem}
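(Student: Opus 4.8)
The plan is to adapt the coupling argument from the proof of Theorem~\ref{T:approx}, exploiting the crucial fact that $\cftree{V}$ and $\bcftree{V}$ are built from the \emph{same} underlying graph $mm$-structure $(V,E,d_V,\omega_n)$ and differ only in the deviation function used to form the quotient ($\cct{V}$ versus $\bcct$). In particular, for any $v,w\in V$ the collection of combinatorial paths $\Gamma_c(v,w)$ is identical for both trees, so the two merge pseudo metrics are infima of path functionals over a common index set. Writing $\alpha_{p,\varepsilon}\colon V \to T_{p,\varepsilon}$ and $d_{p,\varepsilon}$ for the quotient map and merge pseudo metric of $\bcftree{V}$ and $\omega_{p,\varepsilon}$ for the pushforward of $\omega_n$, I would first record, for a path $\gamma$ given by $v=z_0,\ldots,z_N=w$, the expressions
\[
\rho_c(\gamma) = \max_i \cct{V}(z_i) - \big(\cct{V}(v)\wedge \cct{V}(w)\big), \qquad \rho_{c,\varepsilon}(\gamma) = \max_i \bcct(z_i) - \big(\bcct(v)\wedge \bcct(w)\big),
\]
so that $\cdist{V}(\cqmap{V}(v),\cqmap{V}(w)) = \inf_\gamma \rho_c(\gamma)$ and $d_{p,\varepsilon}(\alpha_{p,\varepsilon}(v),\alpha_{p,\varepsilon}(w)) = \inf_\gamma \rho_{c,\varepsilon}(\gamma)$.

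Next I would bound the distortion of the diagonal correspondence $R = \{(\cqmap{V}(v),\alpha_{p,\varepsilon}(v)) \colon v\in V\} \subseteq \ctree{V}\times T_{p,\varepsilon}$. Since $|\cct{V}(u)-\bcct(u)|\leq \varepsilon$ for every $u\in V$, both the maximum term and the $\wedge$ term in $\rho_c(\gamma)$ and $\rho_{c,\varepsilon}(\gamma)$ differ by at most $\varepsilon$, whence $|\rho_c(\gamma)-\rho_{c,\varepsilon}(\gamma)|\leq 2\varepsilon$ for every $\gamma\in\Gamma_c(v,w)$. Taking infima over the \emph{common} path set then gives $|\cdist{V}(\cqmap{V}(v),\cqmap{V}(w)) - d_{p,\varepsilon}(\alpha_{p,\varepsilon}(v),\alpha_{p,\varepsilon}(w))|\leq 2\varepsilon$, so that $dis(R)\leq 2\varepsilon$.

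With the distortion bound in hand, I would invoke Proposition~\ref{P:r2c} with $r=\varepsilon$ to obtain a metric coupling $\delta_r\in M(\cdist{V},d_{p,\varepsilon})$ satisfying $\delta_r(\cqmap{V}(v),\alpha_{p,\varepsilon}(v))=r=\varepsilon$ on the diagonal. Pushing the diagonal measure coupling $v\mapsto(v,v)$ forward through the two quotient maps yields $\bar h\in C(\omega_{p,n},\omega_{p,\varepsilon})$, and exactly as in Theorem~\ref{T:approx}, because the coupling is diagonal the structural offset of $(\delta_r,\bar h)$ equals $r=\varepsilon$. For the functional offset, the induced tree functions satisfy $\cqct{V}(\cqmap{V}(v))=\cct{V}(v)$ and $\bcqct{V}(\alpha_{p,\varepsilon}(v))=\bcct(v)$, so their difference under $\bar h$ is $|\cct{V}(v)-\bcct(v)|\leq\varepsilon$, giving a functional offset of at most $\varepsilon$. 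Taking the maximum of the two offsets yields $\fks(\cftree{V},\bcftree{V})\leq\varepsilon$.

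The only genuine subtlety — and thus the point to get right — is the observation that the two trees share the same combinatorial path set, which is what legitimizes comparing the merge distances term-by-term under a common infimum; everything else is a direct transcription of the offset bookkeeping from Theorem~\ref{T:approx}. It is worth noting that although the distortion comes out as $2\varepsilon$ rather than $\varepsilon$, the factor $1/2$ supplied by Proposition~\ref{P:r2c} restores the clean final constant $\varepsilon$.
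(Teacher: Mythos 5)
Your proposal is correct and follows essentially the same route as the paper: the paper's proof constructs the same coupling $\delta_\varepsilon \in M(d,d')$ by ``arguing as in Lemma~\ref{L:coupling}'' (which is precisely your explicit distortion bound $dis(R) \leq 2\varepsilon$ via comparing $\rho_c$ and $\rho_{c,\varepsilon}$ over the common path set, followed by Proposition~\ref{P:r2c}), and then uses the identical diagonal measure coupling with structural offset $\varepsilon$ and functional offset at most $\varepsilon$. The only difference is expository: you unpack the distortion estimate that the paper leaves implicit by reference to the Coupling Lemma.
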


\begin{proof}
We use the abbreviations $\ftree{V} = (T,d,\omega_p, \kappa)$, $\bcftree{V} = (T',d',\omega'_p, \kappa')$, $\sigma=\cct{V}$, and also $\alpha \colon V \to T$ and $\alpha' \colon V \to T'$ for the quotient maps. Since $|\sigma(v)-\bcct(v)| \leq \varepsilon$, for every $v\in V$, arguing as in Lemma \ref{L:coupling}, we have that $\delta_\varepsilon$ given on pairs $(a,b) \in T \times T'$ by
\begin{equation}
\delta_\varepsilon (a,b) = d_\varepsilon (b,a)\coloneqq \varepsilon + \inf_{v' \in V} \big(d (a,\alpha(v')) + d' (\alpha'(v'), b)\big).
\end{equation}
defines a coupling $\delta_\varepsilon \in M(d,d')$ for which $\delta_\varepsilon (\alpha(v), \alpha'(v))= \varepsilon$, $\forall v \in V$. As in the proof of Theorem \ref{T:approx}, let $h= \Delta_\sharp (\omega_n)$, where $\Delta \colon V \to V \times V$ is the diagonal map, and $\bar{h} \in C(\omega_p,\omega'_p)$ be given by $\bar{h} = (\alpha, \alpha')_\sharp (h)$. Then, the structural offset of $(\delta_\varepsilon, \bar{h})$ satisfies
\begin{equation}
\Big( \int_{T \times T'} \delta_\varepsilon^p (\bar{v}, \bar{w}) \, d\bar{h} (\bar{v}, \bar{w}) \Big)^{1/p} =\Big(\int_V \delta_\varepsilon^p (\alpha(v),\alpha'(v))\, d\omega_n (v)\Big)^{1/p} = \varepsilon .
\end{equation}
Similarly, 
\begin{equation}
\Big( \int_{T \times T'} |\kappa(\bar{v})- \kappa'(\bar{w}|^p \, d\bar{h} (\bar{v}, \bar{w}) \Big)^{1/p} =\Big(\int_V |(\sigma(v)-\bcct(v)|^p\, d\omega_n (v)\Big)^{1/p} \leq \varepsilon.
\end{equation}
Therefore, $\fks \big(\cftree{V}, \bcftree{V}\big) \leq \varepsilon$.
\end{proof}

\section{Summary and Concluding Remarks} \label{S:discussion}

As the $p$-barycenters and modes of a probability distribution $\mu$ on a metric space $(X,d_X)$ exhibit an unstable behavior, this paper introduced a functional merge tree representation of barycenters and modes that is provably stable and can be estimated reliably from data. This has been done at the generality of all Borel probability measures on a Polish metric space with finite $p$-moments. The leaves of a barycentric merge tree represent the connected components of the local minimum sets of the $p$-deviation function (or equivalently, the Fr\'{e}chet $p$-variance function) of $(X,d_X,\mu)$. The merge tree provides a sketch of the interrelationships between the various sublevel sets of the $p$-deviation function, in particular, the merging patterns of the barycenters or modes of $\mu$, thus providing a more complete summary of $(X,d_X,\mu)$ than the barycenters alone. A unifying framework was developed for the investigation of both the barycenter and mode problems, with modes viewed as special types of barycenters associated with diffusion distances. The main results are a stability theorem and a consistency result for barycentric merge trees, which were proven in a Gromov-Wasserstein type framework. The paper also presented a pathway to discretization and computation via combinatorial merge trees. The accuracy guarantees for these discrete approximations were established in Section \ref{S:c2d} only for empirical measures as this sufficed for our purposes because Corollary \ref{C:convergence} guarantees that the BMT for a more general probability measure $\mu$ can be approximated by those for empirical measures. Nonetheless, we note that the argument in Section \ref{S:c2d} can be adapted to prove a similar approximation result more directly for a general distribution $\mu$.

The choice of merge trees was largely guided by the goal of achieving stability keeping the representation of barycenters as simple as possible. Similar representations can likely be obtained through Reeb graphs (cf.\,\cite{curry2024}) at the expense of increasing the complexity of both the model and computations. The focus of the paper was on foundational aspects of stable representations of barycenters leaving questions specific to cases such as distributions on Riemannian manifolds, length spaces, networks, or Wasserstein spaces for further investigation. 

While we provided several computational examples in this paper, their primary function was to provide intuition and enhance exposition. Developing a full-fledged, efficient numerical framework for extracting BMTs from real data remains an important direction of future research. The figures in the paper can be reproduced from the open access code available at our GitHub repository (\url{https://github.com/trneedham/Barycentric-Merge-Trees}).

\bibliographystyle{abbrv}
\bibliography{mtree}

\begin{thebibliography}{10}

\bibitem{afsari2011}
B.~Afsari.
\newblock Riemannian {$L_p$} center of mass: existence, uniqueness, and convexity.
\newblock {\em Proc. Amer. Math. Soc.}, 139:655--673, 2011.

\bibitem{agueh2011}
M.~Agueh and G.~Carlier.
\newblock Barycenters in the {W}asserstein space.
\newblock {\em SIAM J. Math. Anal.}, 43(2):904--924, 2011.

\bibitem{anbouhi2024}
S.~Anbouhi, W.~Mio, and O.~B. Okutan.
\newblock On metrics for analysis of functional data on geometric domains.
\newblock {\em Found. Data Sci.}, 7(3):671--704, 2025.

\bibitem{arnaudon2014}
M.~Arnaudon and L.~Miclo.
\newblock Means in complete manifolds: uniqueness and approximation.
\newblock {\em ESAIM: Probability and Statistics}, 18:185--206, 2014.

\bibitem{beketayev2014}
K.~Beketayev, D.~Yeliussizov, D.~Morozov, G.~Weber, and B.~Hamann.
\newblock Measuring the distance between merge trees.
\newblock In {\em Topological Methods in Data Analysis and Visualization III}, pages 151--166, 2014.

\bibitem{berger2003view}
M.~Berger.
\newblock {\em A Panoramic View of Riemannian Geometry}.
\newblock Springer-Verlag, 2003.

\bibitem{batpat2003}
R.~Bhattacharya and V.~Patrangenaru.
\newblock Large sample theory of intrinsic and extrinsic sample means on manifolds. {I}.
\newblock {\em Ann. Statist.}, 31:1--29, 2003.

\bibitem{batpat2005}
R.~Bhattacharya and V.~Patrangenaru.
\newblock Large sample theory of intrinsic and extrinsic sample means on manifolds. {II}.
\newblock {\em Ann. Statist.}, 33:1225--1259, 2005.

\bibitem{burago2001}
D.~Burago, Y.~Burago, and S.~Ivanov.
\newblock {\em A Course in Metric Geometry}.
\newblock American Mathematical Society, 2001.

\bibitem{cantarella2019random}
J.~Cantarella, T.~Needham, C.~Shonkwiler, and G.~Stewart.
\newblock Random triangles and polygons in the plane.
\newblock {\em The American Mathematical Monthly}, 126(2):113--134, 2019.

\bibitem{chaudmarron2000}
P.~Chaudhuri and J.~S. Marron.
\newblock Scale space view of curve estimation.
\newblock {\em Ann. Statist.}, 28:408--428, 2000.

\bibitem{coifman2006}
R.~R. Coifman and S.~Lafon.
\newblock Diffusion maps.
\newblock {\em Appl. Comput. Harmon. Anal.}, 21(1):5--30, 2006.

\bibitem{curry2022}
J.~Curry, H.~Hang, W.~Mio, T.~Needham, and O.~B. Okutan.
\newblock Decorated merge trees for persistent topology.
\newblock {\em J Appl. and Comput. Topology}, 6(3):371--428, 2022.

\bibitem{curry2024}
J.~Curry, W.~Mio, T.~Needham, O.~B. Okutan, and F.~Russold.
\newblock Stability and approximations for decorated {R}eeb spaces.
\newblock In {\em Symposium on Computational Geometry}, Athens, Greece, June 2024.

\bibitem{diaz2019probing}
D.~H. Diaz~Martinez, C.~H. Lee, P.~T. Kim, and W.~Mio.
\newblock Probing the geometry of data with diffusion {F}r{\'e}chet functions.
\newblock {\em Appl. Comput. Harmon. Anal.}, 47(3):935--947, 2019.

\bibitem{edelman1998geometry}
A.~Edelman, T.~A. Arias, and S.~T. Smith.
\newblock The geometry of algorithms with orthogonality constraints.
\newblock {\em SIAM journal on Matrix Analysis and Applications}, 20(2):303--353, 1998.

\bibitem{flamary2021pot}
R.~Flamary, N.~Courty, A.~Gramfort, M.~Z. Alaya, A.~Boisbunon, S.~Chambon, L.~Chapel, A.~Corenflos, K.~Fatras, N.~Fournier, et~al.
\newblock Pot: Python optimal transport.
\newblock {\em Journal of Machine Learning Research}, 22(78):1--8, 2021.

\bibitem{frechet1948}
M.~Fr\'{e}chet.
\newblock Les \'{e}l\'{e}ments al\'{e}atoires de nature quelconque dans un espace distanci\'{e}.
\newblock {\em Ann. Inst. H. Poincar\'{e}}, 10:215--310, 1948.

\bibitem{gasparovic2025intrinsic}
E.~Gasparovic, E.~Munch, S.~Oudot, K.~Turner, B.~Wang, and Y.~Wang.
\newblock Intrinsic interleaving distance for merge trees.
\newblock {\em La Matematica}, 4(1):40--65, 2025.

\bibitem{grka1973}
K.~Grove and H.~Karcher.
\newblock How to conjugate {$C^1$}-close group actions.
\newblock {\em Math. Z.}, 132:11--20, 1973.

\bibitem{grkaruh1974}
K.~Grove, H.~Karcher, and E.~A. Ruh.
\newblock Group actions and curvature.
\newblock {\em Invent. Math.}, 23:31--48, 1974.

\bibitem{grkaruh1974a}
K.~Grove, H.~Karcher, and E.~A. Ruh.
\newblock Jacobi fields and {F}insler metrics on compact {L}ie groups with an application to differentiable pinching problems.
\newblock {\em Math. Ann.}, 211:7--21, 1974.

\bibitem{hang2019ffunction}
H.~Hang, F.~M{\'e}moli, and W.~Mio.
\newblock A topological study of functional data and {F}r\'{e}chet functions of metric measure spaces.
\newblock {\em J Appl. and Comput. Topology}, 3(4):359--380, 2019.

\bibitem{hausmann1997polygon}
J.-C. Hausmann and A.~Knutson.
\newblock Polygon spaces and grassmannians.
\newblock {\em L'Enseignement Math{\'e}matique}, 43, 1997.

\bibitem{huckemann2024}
S.~Hundrieser, B.~Eltzner, and S.~Huckemann.
\newblock A lower bound for estimating {F}r\'{e}chet means.
\newblock arXiv:2402.12290, 2024.

\bibitem{karcher1977}
H.~Karcher.
\newblock Riemannian center of mass and mollifier smoothing.
\newblock {\em Comm. Pure Appl. Math.}, 30:509--541, 1977.

\bibitem{kasue1994}
A.~Kasue and H.~Kumura.
\newblock Spectral convergence of {R}iemannian manifolds.
\newblock {\em Tohoku Math. J.}, 46(2):147--179, 1994.

\bibitem{kim2017}
Y.-H. Kim and B.~Pass.
\newblock Wasserstein barycenters over {R}iemannian manifolds.
\newblock {\em Adv. Math}, 307:640--683, 2017.

\bibitem{le2001}
H.~Le.
\newblock Locating {F}r\'{e}chet means with application to shape spaces.
\newblock {\em Adv. Appl. Prob.}, 33:324--338, 2001.

\bibitem{lindeberg1994}
T.~Lindeberg.
\newblock {\em Scale Space Theory in Computer Vision}.
\newblock Kluwer, Boston, 1994.

\bibitem{memoli2007}
F.~M\'{e}moli.
\newblock On the use of {G}romov-{H}ausdorff distances for shape comparison.
\newblock In {\em Proceedings Point Based Graphics}, pages 81--90, 01 2007.

\bibitem{memoli2011}
F.~M{\'e}moli.
\newblock Gromov-{W}asserstein distances and the metric approach to object matching.
\newblock {\em Found. Comput. Math.}, 11(4):417--487, 2011.

\bibitem{morozov2013}
D.~Morozov, K.~Beketayev, and G.~Weber.
\newblock Interleaving distance between merge trees.
\newblock {\em Discrete Comput. Geom.}, 49:22--45, 2013.

\bibitem{sokal1962}
R.~R. Sokal and F.~J. Rohlf.
\newblock The comparison of dendrograms by objective methods.
\newblock {\em Taxon}, 11:33--40, 1962.

\bibitem{sturm2006}
K.-T. Sturm.
\newblock On the geometry of metric measure spaces.
\newblock {\em Acta Mathematica}, 196(1):65--131, 2006.

\bibitem{vayer2020fused}
T.~Vayer, L.~Chapel, R.~Flamary, R.~Tavenard, and N.~Courty.
\newblock Fused {G}romov-{W}asserstein distance for structured objects.
\newblock {\em Algorithms}, 13(9):212, 2020.

\bibitem{villani2008optimal}
C.~Villani.
\newblock {\em Optimal Transport: Old and New}, volume 338.
\newblock Springer Science \& Business Media, 2008.

\bibitem{weed2019}
J.~Weed and F.~Bach.
\newblock Sharp asymptotic and finite-sample rates of convergence of empirical measures in {W}asserstein distance.
\newblock {\em Bernoulli}, 25(4A):2620--2648, 2019.

\end{thebibliography}

\end{document}